\newcommand{\ie}{\hbox{\it i.e.\ }}
\newlength\fullwidth
\numberwithin{equation}{section}
\DeclareMathSymbol{\leqslant}{\mathalpha}{AMSa}{"36} 
\DeclareMathSymbol{\geqslant}{\mathalpha}{AMSa}{"3E} 
\DeclareMathSymbol{\eset}{\mathalpha}{AMSb}{"3F}     
\renewcommand{\leq}{\;\leqslant\;}                   
\newcommand{\eps}{\varepsilon}
\def\1{\ifmmode {1\hskip -3pt \rm{I}} \else {\hbox {$1\hskip -3pt \rm{I}$}}\fi}
\newcommand{\tmix}{T_{\rm mix}}
\newcommand{\trel}{T_{\rm rel}}
\newcommand{\D}{\Delta}
\renewcommand{\l}{\lambda}
\renewcommand{\l}{\lambda}
\renewcommand{\a}{\alpha}
\renewcommand{\d}{\delta}
\renewcommand{\t}{\tau}
\newcommand{\g}{\gamma}
\newcommand{\G}{\Gamma}
\newcommand{\e}{\varepsilon}
\renewcommand{\o}{\omega}
\renewcommand{\O}{\Omega}
\newcommand{\Front}{{\rm F}}
\newcommand{\tc}{\thinspace |\thinspace}
\newtheorem{theorem}{Theorem}[section]
\newtheorem{lemma}[theorem]{Lemma}
\newtheorem{proposition}[theorem]{Proposition}
\newtheorem{corollary}[theorem]{Corollary}
\newtheorem{remark}[theorem]{Remark}
\newtheorem{claim}[theorem]{Claim}
\newtheorem{definition}[theorem]{Definition}
\newtheorem{maintheorem}{Theorem}
\newtheorem*{question*}{Question}
\newtheorem*{remark*}{Remark}
\newtheorem*{idefinition*}{Definition}
\newcommand{\N}{\mathbb N}
\newcommand{\cB}{\ensuremath{\mathcal B}}
\newcommand{\cC}{\ensuremath{\mathcal C}}
\newcommand{\cE}{\ensuremath{\mathcal E}}
\newcommand{\cF}{\ensuremath{\mathcal F}}
\newcommand{\cG}{\ensuremath{\mathcal G}}
\newcommand{\cI}{\ensuremath{\mathcal I}}
\newcommand{\cL}{\ensuremath{\mathcal L}}
\newcommand{\cM}{\ensuremath{\mathcal M}}
\newcommand{\cN}{\ensuremath{\mathcal N}}
\newcommand{\cP}{\ensuremath{\mathcal P}}
\newcommand{\cS}{\ensuremath{\mathcal S}}
\newcommand{\cX}{\ensuremath{\mathcal X}}
\newcommand{\bbE}{{\ensuremath{\mathbb E}} }
\newcommand{\bbF}{{\ensuremath{\mathbb F}} }
\newcommand{\bbI}{{\ensuremath{\mathbb I}} }
\newcommand{\bbN}{{\ensuremath{\mathbb N}} }
\newcommand{\bbP}{{\ensuremath{\mathbb P}} }
\newcommand{\bbR}{{\ensuremath{\mathbb R}} }
\newcommand{\bbZ}{{\ensuremath{\mathbb Z}} }
\newcommand{\Z}{{\ensuremath{\mathbb Z}} }
\newcommand{\E}{\mathbb{E}}
\let\a=\alpha    \let\d=\delta  \let\e=\varepsilon
 \let\g=\gamma \let\h=\eta      \let\l=\lambda
      \let\o=\omega      
  \let\s=\sigma \let\t=\tau   
\let\D=\Delta   \let\G=\Gamma   
\let\O=\Omega      
\renewcommand{\le}{\leq}
\begin{document}
\title[Upper triangular matrix walk]{Upper triangular matrix walk: Cutoff for finitely many columns}
\author{Shirshendu Ganguly}
\author{Fabio Martinelli}
 \thanks{Department of Statistics, University of California, Berkeley; sganguly@berkeley.edu}
 \thanks{Dip. Matematica \& Fisica, Universit\'a Roma Tre, Italy; martin@mat.uniroma3.it}
\begin{abstract}We consider random walk on the group of uni-upper triangular matrices with entries in $\bbF_2$ which forms an important example of a nilpotent group.  Peres and Sly \cite{peressly} proved tight bounds on the mixing time of this walk up to constants. It is well known that the single column projection of this chain is the one dimensional East process. In this article, we complement the Peres-Sly result by proving a cutoff result for the mixing of finitely many columns in the upper triangular matrix walk at the same location as the East process of the same dimension. Moreover, we also show that the spectral gaps of the matrix walk and the East process are equal.
The proof of the cutoff result is based on a recursive argument which uses a local version of a dual process appearing in \cite{peressly}, various combinatorial consequences of mixing and concentration results for the movement of the front in the one dimensional East process. \end{abstract}
\maketitle

\section{Introduction}
Random walks on the group $\cM_n(q)$ of $n\times n$-upper triangular matrices with ones along the diagonal and entries from the finite field $\mathbb{F}_{q}$, $q$ a prime, have received quite a lot of attention.  The random walk, sometimes called the \emph{upper triangular matrix walk}, in the case $q=2$ is the Markov chain whose generic step consists in choosing uniformly  a row among the first $(n-1)$ rows and adding to it the next row mod 2. It is easy to check that this chain is reversible w.r.t. the uniform measure on $\cM_n(2)$. A natural variant, called the \emph{lazy upper triangular matrix walk}, entails to perform the above addition with probability $1/2$.  This is equivalent to adding the  to-be-added row only after multiplying by a uniform Bernoulli variable. We refer the interested reader to  \cite{peressly} for a nice review of the literature related to this walk and other related variants. 

In this article, we consider the continuous time version of the upper triangular matrix walk  where each row at rate one  is updated by adding the row below it with probability $1/2.$ Sharp bounds on the spectral gap were proven by Stong \cite{Stong} implying, in particular, that the spectral gap $\lambda_2(n)$ is positive uniformly in $n$. Using an elegant argument, Peres and Sly \cite{peressly} proved that the total variation mixing time $t_{\rm{mix}}(n)=\Theta(n)$. 
From the above results it follows that $\lim_{n\to \infty}\lambda_2(n)
\times t_{\rm{mix}}(n)= +\infty$, a known necessary condition for the
occurrence of the so called \emph{mixing time cutoff} \cite{AD86}, \ie
a sharp transition in the total variation distance from equilibrium
which drops from being close to one to being close to zero in a very
small time window compared to the mixing time scale.    

In \cite{peres1}, Y. Peres conjectured that, for many natural classes of reversible Markov chains, the above condition (sometimes referred to as the \emph{product condition}) is also sufficient for the occurrence of cutoff, despite of the fact that, in full generality, this is known to be false (cf. \cite{LPW}*{Chapter 18}). Thus it is a natural and interesting problem  to decide whether the upper triangular matrix walk exhibits cutoff or not. 

It has been observed before and  was crucially used in \cite{peressly}, in the proof of their mixing time result, that the marginal process on a given column coincides with the East process \cites{AD02,CMRT,BFMRT} at density $1/2$, a well known constrained interacting particle system. The East chain is known to exhibit cutoff (cf. \cite{GLM}), a result which, combined with the previous observation, suggests that the upper triangular matrix walk in continuous time might do the same. 

In this paper we extend and complement the Peres-Sly result by proving that (i) the spectral gap of the upper triangular matrix walk is equal to the spectral gap of the East process on $n-1$ vertices; (ii) the marginal chain on \emph{finitely} many columns exhibits mixing time cutoff at the mixing time of the column with the largest index, among the chosen ones. 

Note that, even though at stationarity the elements of the matrix are independent random bits, the dynamics of the walk makes the column evolutions highly correlated so that the joint mixing of a group of columns is by no means a straightforward consequence of the mixing of only one column (the East process). We also remark that, perhaps surprisingly, certain numerical experiments suggest that the mixing time of the full chain is strictly larger (at the linear scale, $n$) than the mixing time of one column (see Section \ref{numerics}).  We conclude by mentioning that recent progress on the temporal mixing of other finite dimensional statistics in a similar setting was also made in \cite{dia2}. 
\section{Setup, motivations and main results }
\label{sec:prelims}
\subsection{Setting and notation} 
Throughout this article for any $n\in \bbN,$ we will write $\cX_n$ for
the vector space $\bbF_2^n,$ with the usual addition operation mod $2$ denoted by ``$+$'' in the sequel,
and standard basis vectors ${\textbf e}_1,\ldots {\textbf e}_n$. Elements of
$\cX_n$ will usually be denoted by ${\textbf Y},{\textbf Z}$ etc with
coordinates $\{{\textbf Y}(x)\}_{x=1}^n$ and they will
always be thought as column vectors. Sometimes a vector $\textbf X$ will depend on a time parameter $t$ and randomness variable $\o$. In that case we will write ${\textbf X}(t;\o;x)$ for its $x$-coordinate.   Row vectors will  always be
denoted by ${\textbf Y}^\top,{\textbf Z}^\top$ etc and their vector space will be
denoted by $\cX_n^\top$.  Multiplication between ${\textbf Z}^\top$ and ${\textbf Y}$
will be denoted by ${\textbf Z}^\top \cdot Y$. 

We will write $[n]$
for the set $\{1,2,\dots,n\}$ and we will use the letter $c$ to denote
a universal numerical
constant whose value might change from line to line inside the same
proof. Finally for any random variable $\xi$ with distribution $\cP$ and any other law $\nu$ we will sometimes write $d_{TV}(\xi,\nu)$ for the total variation distance between $\cP$ and $\nu$.
\subsubsection{The Markov chain} 
Our state space is  the set of all
upper triangular matrices with entries in $\bbF_2$:
$$
\cM_n =\{M\in \bbF_2^{n \times n}: M(i,j)=0, \text{ for } i>j,
M(i,i)=1\},
$$
\ie
$$
M=\left[\begin{array}{cccc}
1 & M(1,1) & M(1,2)&  \ldots \\
0 & 1 & \ldots & \ldots \\
\vdots & \vdots & \ldots & M(n-1,n) \\
0& 0& \ldots & 1
\end{array}\right].
$$
When $n=3,$ this is well known as the  Heisenberg group. Let $E^{i,j}$ be
the matrix which is one at the position $(i,j)$ and zero every where
else. 
On $\cM_n$ we consider the continuous time Markov chain (random walk) $\{M(t)\}_{t\ge 0}$=
$\{M^{(n)}(t)\}_{t\ge 0}$ (we will drop the dependence on $n$ in the
notation whenever there is no scope of confusion) whose generator is
given by 
\begin{equation}
 \label{eq:gen} 
(\cL f)(M)= \frac 12 \sum_{i=1}^{n-1}\sum_{a=0}^1\left[f((\bbI + a E^{i,i+1})M)-f(M)\right].
\end{equation}
Notice that for ${\textbf Y}\in \cX_n$ and $x\in [n]$: 
\begin{align*}
  ((\bbI+ E^{i,i+1}){\textbf Y})(x) &= {\textbf Y}(x) + \d_{i,x}{\textbf Y}(i+1),\\
({\textbf Y}^\top (\bbI+E^{i,i+1}))(x) &= {\textbf Y}(x) + \d_{x,i+1}{\textbf Y}(i).
\end{align*} 
Thus the Markov chain can be informally described as follows: 
every row but the last one updates itself at rate one by first sampling a fair
Bernoulli variable $\xi$ and then by adding (mod $2$) to itself the next
row 
multiplied by
$\xi$. Clearly if $\xi=0$ nothing happens, while if $\xi=1$ all the entries of the updating row, with the entry below them equal to $1,$ change values ($1\to 0$ or $0\to 1$).
For some purposes it will be  convenient to write 
$M(t)\equiv [{\textbf M}_1(t), {\textbf M}_2 (t), \ldots,{\textbf M}_n (t)]$ where
${\textbf M}_i(t)\in \cX_n$ denotes the $i^{th}$ column at time
$t$. In the sequel we will sometimes refer
to the Markov chain as the \emph{matrix walk} (MW).
\begin{remark}\label{rem:1}\ 
\begin{enumerate}
\item It is easy to check that the Markov chain is ergodic and
  reversible w.r.t. to the uniform measure $\pi=\pi^{(n)},$ on
  $\cM_n$. In particular at equilibrium all the entries of $M$ above
  the diagonal are i.i.d Bernoulli$(1/2)$ random variables.
\item The marginal process of every column is also a Markov chain and on the $i^{th}$ column it
  coincides with the East process \footnote{In the standard definition
    of the East process on $[n]$ 
    at density $p\in (0,1)$, for every $x\in [n]$ with rate one
    the spin $\s(t;x)\in \{0,1\}$, is replaced by a fresh Bernoulli($p$) variable
    iff $x=1$ or $x\in [2,n]$ \emph{and} the constraint $\s(t;x-1)=0$ is satisfied. It is easy to
  check that for $p=1/2$ and after the change of variables
  $\eta(x)=1-\s(x)$ the generator of the East process coincides with
  that of the ${\textbf M}^{(n)} (t)$ process.} on $[i-1]$ at density $1/2$, a
  well known kinetically constrained interacting particle system
  (cf. \cite{CMRT} and references therein). 
\end{enumerate}
\end{remark}

\subsection{Mixing time and open problems}
It is natural to consider the $\eps$-mixing time of the process
$\{M(t)\}_{t\ge 0}$ 
\[
\tmix(n,\eps)=\inf\Big\{t:\ \max_{M\in \cM_n} d_{TV}(M(t),\pi)\le
\eps\Big\},\quad \eps\in (0,1),
\]
where $M(0)=M$. If $\eps=1/4$ we will simply refer to
$\tmix(n)\equiv \tmix(n,1/4)$ as the mixing time. Using Remark \ref{rem:1} we immediately get
that 
\begin{equation}\label{lb1}
\tmix(n,\eps)\ge \max_{j=2,\dots, n}\tmix^{\rm East}(j,\eps),
\end{equation}
where $\tmix^{\rm East}(j,\eps)$ is the $\eps$-mixing time of the East
process at density $1/2$ on $[j]$. For the latter the following
sharp result was recently established by E. Lubetzky and the authors:
\begin{theorem}[\cite{GLM}]
\label{th:main2}
For any $p\in (0,1)$
there exists a positive constant $v=v(p)$ such that the East process on
$[n]$ at density $p$ exhibits cutoff at $v^{-1}n$ with an $O(\sqrt{n})$-window. More precisely,
for any fixed $0<\eps<1$ and large enough $n$,
\begin{align*}
 \tmix^{\rm East}(n,\eps)&= v^{-1} n + O\left(\Phi^{-1}(1-\eps)\, \sqrt{n}\right),
\end{align*}
where $\Phi$ is the c.d.f.\ of $\cN(0,1)$. 
\end{theorem}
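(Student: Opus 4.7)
The plan is to introduce the notion of the \emph{front} $\Front(t)$ of the East process at density $p$, defined (say) as the position of the rightmost zero in the configuration when the chain is started from the all-ones state with a single zero planted at site $0$ on the half-line $\Z_{\geq 0}$. Because the East constraint is directional, the dynamics at site $x$ depends only on sites to its left, so the restriction of the chain to $[1,\Front(t)]$ is independent of the bulk to the right. The first step is to establish a strong law of large numbers and a Gaussian CLT for $\Front(t)$, i.e.\ $\Front(t) = v t + O(\sqrt{t})$ with $O(\sqrt{t})$ Gaussian fluctuations, where $v=v(p)$ is the asymptotic speed. This uses subadditivity of suitable hitting times combined with a renewal/regeneration decomposition: once a zero has propagated past a site and the local configuration is sampled from stationarity, the evolution to the right of that zero is again a copy of the front process, producing an i.i.d.\ sum structure up to mild dependence.

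The second step is to prove rapid \emph{local mixing behind the front}: conditioned on the front being far ahead of a site $x$, the configuration to the left of the front equilibrates on $O(1)$ timescale (uniformly in $n$), using the positive spectral gap of the East process \cite{CMRT} and the fact that sites behind the front see a chain that is essentially the stationary East chain on a growing interval with a zero boundary. One then upgrades this to a total variation statement by a coupling argument that matches two chains sharing the same front trajectory and whose bulks couple using standard constrained-dynamics coupling.

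For the upper bound on $\tmix^{\rm East}(n,\eps)$, start from any configuration, run until time $t = v^{-1}n + K\sqrt{n}$, and use the CLT to guarantee $\Front(t) \geq n$ with probability $\geq 1 - \eps/2$; then the local mixing step shows the restriction to $[n]$ is $\eps/2$-close to stationarity. For the lower bound, choose an initial configuration that differs from the stationary law on the rightmost sites (e.g.\ a specific bit pattern near $n$) and observe that at time $t=v^{-1}n - K\sqrt{n}$ the front has, with probability $\geq 1 - \eps/2$, not yet touched $n$, so the rightmost bits have not been updated and a distinguishing statistic (e.g.\ a specific spin value at site $n$) separates the two laws in total variation.

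The main obstacle is proving the CLT for $\Front(t)$ with a genuinely Gaussian window, not merely sub-Gaussian concentration. The natural regeneration structure is not i.i.d.\ because residual correlations persist behind the front, so one must either use a coupling-with-stationarity argument to obtain exact renewal times with exponential tails, or import a deformed/weighted CLT. A secondary obstacle is that the East process is \emph{not} monotone, which rules out the cleanest censoring/coupling tools: one has to replace monotonicity by the combinatorial ``distinguished path'' analysis of the East chain and by exploiting the one-sided nature of the constraint to decouple the region past the front. Once these two ingredients are in place, the cutoff window of order $\sqrt{n}$ follows immediately from Gaussian fluctuations around the deterministic front displacement $vt$.
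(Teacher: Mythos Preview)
This theorem is not proved in the present paper: it is quoted verbatim from \cite{GLM} and used as a black-box input (see the citation in the theorem header and the discussion in Section~\ref{sec:East}). There is therefore no ``paper's own proof'' to compare your proposal against.

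That said, the paper does reveal enough of the \cite{GLM} machinery (Lemma~\ref{cor:wf} in the appendix, the discussion around Blondel's result, and the proof of Corollary~\ref{onecutoff}) to assess your sketch. Your overall architecture---front process, CLT for the front, equilibrium behind the front, then matching upper and lower bounds---is correct. Two points where your description diverges from what \cite{GLM} actually does:
\begin{itemize}
\item The CLT for the front is \emph{not} obtained via an i.i.d.\ regeneration decomposition. As Lemma~\ref{cor:wf} shows, one instead passes to the process as seen from the front, uses Blondel's convergence to an invariant measure $\nu$, and deduces that the unit-time increments $\xi_n=X(\eta(n))-X(\eta(n-1))$ are a weakly dependent sequence with stretched-exponential decorrelation and exponential moments. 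The CLT is then a CLT for weakly mixing sequences, not for renewal sums. Your ``regeneration with exponential tails'' program would require exhibiting genuine regeneration times, which the lack of monotonicity makes delicate; \cite{GLM} sidesteps this entirely.
\item ``Local mixing behind the front'' is not an approximate statement requiring the spectral gap: it is \emph{exact}. The coupling argument referenced in the proof of Corollary~\ref{onecutoff} gives $d_{TV}(\eta(t),\pi)\le \bbP(X(\eta(t))<n)$ directly---sites swept by the front are already i.i.d.\ Bernoulli. This is why the upper bound reduces purely to the front CLT with no additional $O(1)$ relaxation cost.
\end{itemize}
Your lower-bound idea (the last site is untouched until the front arrives) is exactly right.
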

By \eqref{lb1} it follows that $\tmix(n,\eps)\ge v_*^{-1} n +
O\left(\Phi^{-1}(1-\eps)\, \sqrt{n}\right)$ where $v_*\equiv v(1/2)$.  
A comparable \emph{upper bound}, a much harder task, was proved using
a very elegant argument by Y. Peres and A. Sly:
\begin{theorem}[\cite{peressly}]
\label{peressly} $\tmix(n) = \Theta(n)$.
\end{theorem}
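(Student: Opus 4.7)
The lower bound is immediate. By Remark~\ref{rem:1}(2), the projection of $M^{(n)}(t)$ onto the $n$-th column is an East process at density $1/2$ on $[n-1]$, so \eqref{lb1} together with Theorem~\ref{th:main2} yields $\tmix(n)\geq v_*^{-1}(n-1)-O(\sqrt n)=\Omega(n)$. The substance of the theorem lies in the matching upper bound $\tmix(n)=O(n)$, and my plan for it is as follows.

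The first step is a reduction to the initial condition $M(0)=\bbI$. The generator \eqref{eq:gen} acts by \emph{left} multiplication by elements of $\cM_n$, and $\pi$ is left-invariant on this group. Hence if $A(t)$ denotes the chain started at the identity, then the chain started at an arbitrary $M_0\in\cM_n$ has the law of $A(t)M_0$, and $d_{TV}(M(t),\pi)=d_{TV}(A(t),\pi)$. It therefore suffices to bound the latter at time $t=Cn$ for a suitable absolute constant $C$.

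The second step lifts single-column mixing to joint mixing. The key structural fact (already invoked for the lower bound) is that for every $v\in\cX_n$ the vector-valued process $A(t)v$ is an East process at density $1/2$ on the coordinates below the top nonzero entry of $v$: an update $M\mapsto(\bbI+\xi E^{i,i+1})M$ changes $(Mv)(i)$ to $(Mv)(i)+\xi(Mv)(i+1)$, which is a fresh Bernoulli$(1/2)$ refresh of $(Mv)(i)$ precisely when the ``East constraint'' $(Mv)(i+1)=1$ holds. Choosing $v={\textbf e}_j$ recovers the $j$-th column and, by Theorem~\ref{th:main2}, each column is $\eps/n$-close to uniform at time $Cn$ provided $C=C(\eps)$ is large enough. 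To upgrade this to the joint law I would proceed recursively from right to left: couple column $n$ of $A(Cn)$ to the $n$-th column of an independent uniform sample $U\sim\pi$, then, conditionally on the entire trajectory of column $n$, couple column $n-1$, and so on down to column $2$. A union bound over the $n$ columns would then give $d_{TV}(A(Cn),\pi)\leq\eps$.

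The main obstacle I expect is the control of the conditional dynamics in the recursive step. Conditioning on the trajectories of columns $j+1,\dots,n$ reveals the values of exactly those bits $\xi$ that fired at update times when the corresponding entry of some later column was $1$; this biases the noise filtration seen by column $j$, so its conditional evolution is no longer an unbiased East chain. To handle this I would exploit concentration of the East ``front''---the position of the rightmost occupied site---which travels at deterministic speed with Gaussian fluctuations; a quantitative front estimate shows that only a vanishing fraction of the noise bits relevant to column $j$ is read off by later columns, so a perturbative comparison with an unconditioned East chain should still yield $O(n)$ conditional mixing. This ``local dual-process'' coupling is exactly the kind of input supplied by the construction of \cite{peressly}, and is the technical crux on which a clean proof of $\tmix(n)=O(n)$ stands.
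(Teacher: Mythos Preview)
This theorem is quoted from \cite{peressly} and is not proved in the present paper; the paper does, however, outline the Peres--Sly mechanism in Section~\ref{adpr} and in the proof of Theorem~\ref{thm:trel}, so a comparison is possible.

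Your lower bound and the reduction to $M(0)=\bbI$ are fine, and a column-by-column recursion is indeed the correct high-level structure. The gap is in how you propose to carry out the conditional step. You assert that front concentration implies that ``only a vanishing fraction of the noise bits relevant to column $j$ is read off by later columns.'' This is false. Starting from $\bbI$, columns $j$ and $j+1$ have their active regions (between the initial $1$ and the propagating front) overlapping on all but $O(1)$ rows; behind the front each column is essentially Bernoulli$(1/2)$, so at a constant fraction of ring times both columns have a $1$ in the row below and the Bernoulli bit is revealed by the later column. The bias on the noise seen by column $j$ after conditioning is therefore not perturbative, and no ``comparison with an unconditioned East chain'' can close the argument.

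The actual Peres--Sly argument, as reproduced around \eqref{eq:linear} and in Section~\ref{adpr}, proceeds differently. One does not attempt to control the conditional law of an earlier column given later ones; instead, for the \emph{last} column one isolates the rings at row $n-1$. Since $M_{n,n}\equiv 1$ while $M_{n,\ell}=0$ for $\ell<n$, every such bit affects \emph{only} column $n$, giving the decomposition ${\textbf M}_n(t)={\textbf A}_0+\sum_m \xi_m {\textbf A}_m$ with the ${\textbf A}_m$ and all other columns measurable with respect to the remaining randomness. The crux is then to show that the ${\textbf A}_m$ span $\cX_{n-1}$ with high probability, and this is where the adjoint map enters: by \eqref{eastdual}, spanning is equivalent to a hitting statement for an East process run on the time-reversed randomness $\o^*$, which one bounds directly. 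This yields $d_{TV}(M^{(n)}(t),\pi)\le d_{TV}(({\textbf M}_1,\dots,{\textbf M}_{n-1})(t),\pi)+\bbP(\cC_n^c)$ and one recurses. Your final sentence gestures at this ``dual-process'' input but does not deploy it; as written, the proposal invokes \cite{peressly} to fill precisely the hole that constitutes the theorem.
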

Two natural questions arise at this point:
\begin{enumerate}
\item Is ${\limsup_n \tmix(n)/n= v_*^{-1}}$ ?  
\item Does $\tmix(n,\eps)$ exhibit the cutoff phenomenon similar to the
  single column process (\ie the East process) ?
\end{enumerate}
The main purpose of this article is to prove a positive answer to the above
questions for certain finite dimensional projections (e.g. for
the marginal process of the last $k$ columns, where $k$ is either
bounded in $n$ or it
grows very slowly). We also provide certain numerical findings which suggest that perhaps $\limsup_n \tmix(n)/n > v_*^{-1}$. 

Before stating our results more formally we recall that in order to
have the cutoff phenomenon  the \emph{relaxation
  time} $\trel(n)$ of the chain, defined as the inverse spectral gap of
the Markov generator, must satisfy $\trel(n)=o(\tmix(n))$ as $n\to
\infty$ (cf. \cite{LPW}). That MW satisfies this condition follows from work of Stong \cite{Stong} as mentioned previously. However we prove a stronger comparison result showing that in fact  $\trel(n)=\trel^{\rm East}(n-1)$,
where $\trel^{\rm East}(n-1)$ is the relaxation time of the East process
on $[n-1]$ at density $1/2$. The latter was shown to be bounded 
in $n$ (cf. \cite{AD02} or \cite{BFMRT}). 
\subsection{Main results}
\label{sec:main results}
For our arguments it will be convenient to generalize our initial
setting. For $1\le i_1< i_2 < \ldots < i_k\le n$ let
\[
\cM^{(n)}_{[i_1, \ldots, i_k]} =\{A\in \bbF_2^{n \times k}: A(i,j)=0, \text{ for } i>i_j,
A(i_j,i_j)=1\},
\]
so that $\cM_n= \cM^{(n)}_{[1,2,\ldots,n]}.$ As before we will simply
write $\cM_{[i_1, \ldots, i_k]}$ for $\cM^{(n)}_{[i_1, \ldots, i_k]}$ if
no confusion arises. Observe that for any $M\in \cM_n$ the sub-matrix  obtained from $M$ by retaining only
the columns $i_1,i_2,\dots, i_k$ belongs to $\cM_{[i_1, \ldots, i_k]}$.
 
Using the above observation, it is not hard to see that  the definition of
the random walk $M(t)$ on $\cM_n$  naturally extends to a continuous time ergodic
reversible Markov chain on $\cM_{[i_1, i_2,\ldots,
  i_k]}$, with reversible measure the uniform measure $\pi_{[i_1,i_2,\ldots,i_k]}$ on $\cM_{[i_1, i_2,\ldots, i_k]}$. We will denote by 
${\tmix}(n;\ [i_1,i_2,\ldots,i_k], \eps)$ and ${\trel}(n;\
[i_1,i_2,\ldots,i_k])$ its mixing time and relaxation time respectively. 
Our two main results read as follows.
\begin{maintheorem}
\label{thm:trel}
For any $1\le i_1< i_2 < \ldots < i_k\le n$ 
\[
{\trel}(n;\ [i_1,i_2,\ldots,i_k])=\trel^{\rm East}(i_k-1).
\]
In particular the relaxation time of the process $\{M(t)\}_{t\ge 0}$ on $\cM_n$
coincides with that of the East process on $[n]$ at density $1/2$.    
\end{maintheorem}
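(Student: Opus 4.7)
The plan is to deduce the identity $\trel(n;[i_1,\ldots,i_k])=\trel^{\rm East}(i_k-1)$ from matching one-sided inequalities. The lower bound is immediate: by Remark~\ref{rem:1} the marginal of the matrix walk on the single column $i_k$ coincides with the East process on $[i_k-1]$ at density $1/2$, and since a Markovian projection of a reversible chain cannot increase the spectral gap, $\trel(n;[i_1,\ldots,i_k])\ge\trel(n;[i_k])=\trel^{\rm East}(i_k-1)$.

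For the matching upper bound I first reduce to the full matrix walk. Any row-$i$ update of $\mathrm{MW}^{(n)}$ with $i\ge i_k$ flips $M(i,j)$ only when $M(i+1,j)=1$, which for $j\le i_k$ is impossible because $i+1>i\ge i_k\ge j$ forces the sub-diagonal entry $M(i+1,j)$ to vanish. Hence the chain on columns $[1,2,\ldots,i_k]$ inside $\cM_n$ is isomorphic to the full matrix walk $\{M^{(i_k)}(t)\}$ on $\cM_{i_k}$, and since $[i_1,\ldots,i_k]$ is a further Markovian projection the problem reduces to proving $\gap(\mathrm{MW}^{(m)})\ge\gap^{\rm East}(m-1)$ for every $m\ge 2$. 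I would prove this by induction on $m$, the base $m=2$ being trivial since both gaps equal $1$. For the step, write $\cM_m=\cM_{m-1}\times C$ with $C\in\{0,1\}^{m-1}$ denoting the last column, so $L^2(\cM_m)=L^2(A)\otimes L^2(C)$. Each row-$i$ update factors, via $(\bbI+E^{i,i+1})M$, into independent deterministic bijections on $A$ and on $C$; this forces the projections $\Pi_A f(A,C):=\bbE_C f$ and $\Pi_C f(A,C):=\bbE_A f$ to commute with the generator $\cL$ and produces the $\cL$-invariant orthogonal decomposition
\[
L^2(\cM_m) \;=\; \bbR \,\oplus\, L^2_0(A) \,\oplus\, L^2_0(C) \,\oplus\, \bigl(L^2_0(A)\otimes L^2_0(C)\bigr).
\]
On $L^2_0(A)$, $\cL$ acts as the $\mathrm{MW}^{(m-1)}$ generator, whose gap by the inductive hypothesis and the monotonicity of the East gap in system size (itself a projection argument) is at least $\gap^{\rm East}(m-2)\ge\gap^{\rm East}(m-1)$. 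On $L^2_0(C)$, $\cL$ is the East generator on $[m-1]$, with gap exactly $\gap^{\rm East}(m-1)$.

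The substantive remaining task is to establish the same gap bound on the cross block $L^2_0(A)\otimes L^2_0(C)$. I would expand in the Walsh basis $\chi_S(C)=(-1)^{\sum_{x\in S}C(x)}$ and verify that for $i\le m-2$ the row-$i$ operator sends $f_S\otimes\chi_S$ to $(T_i^A f_S)\otimes\chi_{\sigma_i(S)}$ with $\sigma_i(S):=S\triangle\{i+1\}$ if $i\in S$ and $\sigma_i(S):=S$ otherwise, while $T_{m-1}$ merely multiplies $\chi_S$ by $(-1)^{\mathbf 1(m-1\in S)}$. Consequently $\cL$ preserves the orbits of $\sigma_\bullet$ on $\{S\neq\emptyset\}$, indexed by $j(S):=\min S$, and on each $j$-orbit the scalar quotient (obtained by fixing the $L^2(A)$-factors constant) is the East process on $[m-1-j]$ sites augmented with a killing term of rate $1$ when the rightmost site equals $1$; a Walsh decomposition of $\mathrm{East}^{(m-j)}$ in its last coordinate realises this scalar dual as an invariant subspace of $\mathrm{East}^{(m-j)}$ containing no constant functions, so its spectrum lies inside that of $\mathrm{East}^{(m-1)}$ and its gap is at least $\gap^{\rm East}(m-1)$. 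The main obstacle is to transfer this scalar-orbit gap estimate to the full block acting on $L^2(A)\otimes\bbC^{\text{orbit}}$: because the unitaries $T_i^A$ do not commute for $m\ge 4$, no simultaneous diagonalisation is available, and one must combine a variance decomposition conditional on the orbit coordinate with the inductive bound on $\mathrm{MW}^{(m-1)}$ to absorb the $A$-dependence without loss of gap. Once this is in place every block satisfies the desired bound, the induction closes, and the theorem follows.
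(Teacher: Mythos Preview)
Your reduction to the full matrix walk and the $\cL$-invariant decomposition $L^2(\cM_m)=\bbR\oplus L^2_0(A)\oplus L^2_0(C)\oplus\bigl(L^2_0(A)\otimes L^2_0(C)\bigr)$ are correct, and the first two nontrivial blocks are handled as you say. (Minor slip: a Markovian projection can only \emph{increase} the gap, equivalently decrease $\trel$; your inequality $\trel(n;[i_1,\dots,i_k])\ge\trel^{\rm East}(i_k-1)$ is nonetheless the right one.)

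The genuine gap is precisely where you place it. On the cross block the Dirichlet form reads, for $f=\sum_S f_S\chi_S$ with $f_S\in L^2_0(A)$,
\[
\cD(f)=\tfrac14\sum_{i}\Bigl[\sum_{S\not\ni i}\|f_S-T_i^Af_S\|_A^2+\sum_{S\ni i}\|f_S-T_i^Af_{\sigma_i(S)}\|_A^2\Bigr],
\]
and the mixed terms $\|f_S-T_i^Af_{\sigma_i(S)}\|^2$ are \emph{not} comparable, in either direction, to the scalar-orbit terms $\|f_S-f_{\sigma_i(S)}\|^2$ that your East-subspace argument controls (take $f_S$ an eigenvector of $T_i^A$ of eigenvalue $\pm1$ to see both inequalities can fail). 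A conditional variance decomposition plus the inductive bound on $\mathrm{MW}^{(m-1)}$ would control $\sum_i\|f_S-T_i^Af_S\|^2$, but this is not the quantity appearing when $i\in S$, and I do not see a loss-free transfer. Your scheme does give the right answer for $m=3$ (the cross-block gap there equals $1-1/\sqrt2=\gap^{\rm East}(2)$), but only because $L^2_0(A)$ is one-dimensional and each $T_i^A$ is scalar; for $m\ge4$ the non-commutativity you flag is a real obstruction, not merely a technical nuisance.

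The paper avoids this altogether. Rather than a spectral decomposition it bounds the total-variation decay rate: using the linearity representation with $i=m-1$ one peels off the last column to get $d_{TV}(M^{(m)}(t),\pi)\le\sum_{j\le m}\bbP(\cC_j^c)$, where $\cC_j$ is the event that the vectors ${\textbf A}_j$ span $\cX_{j-1}$. Via the adjoint-process identity of Section~\ref{adpr}, $\cC_j^c$ is contained in a union over at most $2^j$ initial conditions of the event that the East process on $[j-1]$ sees no legal ring at its last site before time $t$. A result of Chleboun--Faggionato--Martinelli (\cite{chleboun}) shows this persistence probability decays at rate exactly $\gap^{\rm East}(j-1)$, whence $\limsup_t t^{-1}\log d_{TV}\le -\gap^{\rm East}(m-1)$ and so $\gap(\mathrm{MW}^{(m)})\ge\gap^{\rm East}(m-1)$. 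This softer, decay-rate argument never confronts the cross block at all.
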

\begin{maintheorem}\label{finitecutoff}
There exists $C>0$ such that, given $k$ and $\eps\in (0,1)$, for all large enough $n$ and all $1\le i_1< i_2 <
\ldots < i_k\le n$ such that $i_k= n$,  
$${\tmix}(n,\eps;\ [i_1,i_2,\ldots,i_k])\le n/v_*+ Ck 2^k\sqrt{n}\log^{2}(n).$$
\end{maintheorem}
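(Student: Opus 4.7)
The plan is to proceed by induction on $k$, combining the single-column cutoff (Theorem \ref{th:main2}) with a localized version of the dual/``history'' process of \cite{peressly} and the sharp front-concentration estimates for the one-dimensional East chain.

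The base case $k=1$ is immediate from Remark \ref{rem:1}(2) and Theorem \ref{th:main2}. For the inductive step, assume the estimate for $k-1$ tracked columns and fix indices $1\le i_1<\cdots<i_k=n$. Writing $t=n/v_*+Ck2^k\sqrt n\log^2 n$, I would split $[0,t]$ into a ``bulk'' phase $[0,t_1]$, with $t_1=n/v_*+C(k-1)2^{k-1}\sqrt n\log^2 n$, and a shorter ``finalizing window'' $[t_1,t]$ of length $\Delta t\asymp 2^k\sqrt n\log^2 n$. By the induction hypothesis applied to the smaller columns $i_1,\ldots,i_{k-1}$, and by Theorem \ref{th:main2} applied to column $n$ (whose single-column mixing time is at most $t_1$), both the marginal on the smaller columns and the marginal on column $n$ are $(\eps/2)$-close to stationarity at time $t_1$, and hence at time $t$. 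The remaining task is to upgrade these two \emph{marginal} mixing statements to a \emph{joint} one, i.e.\ to bound by $\eps/2$ the TV distance between the joint law of the $k$ columns at time $t$ and the product of the two stationary marginals above.

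For this upgrade I would exploit the linear representation $M(t)=A(t)M(0)$, with $A(t)\in\cM_n$ a random product of elementary matrices $(\bbI+aE^{i,i+1})$ that acts identically on every column; this shared action is both the source of inter-column correlations and the object to analyze. Following \cite{peressly}, for each coordinate $(r,i_j)$ one traces backward only inside the window $[t_1,t]$ the set of time-$t_1$ entries and intra-window coin flips on which $M(r,i_j,t)$ linearly depends. By the front concentration underlying Theorem \ref{th:main2}, the dual associated with the largest column $n$ sweeps $O(v_*\Delta t)$ rows during the window with $O(\sqrt{\Delta t})$ fluctuations, whereas the duals of the smaller (already mixed) columns are shallow. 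Coin flips shared between these duals enter the resulting linear expression multiplied by factors of the form $M(\cdot,i_{j'},s)$ with $s\in[t_1,t]$; since column $i_{j'}$ is near equilibrium throughout the window, these factors behave as nearly independent Bernoulli$(1/2)$'s, and an averaging argument based on a combinatorial enumeration of dual patterns would reduce the joint law to a product of marginals up to a TV error controlled by an $L^2$/second-moment estimate. The factors $k2^k$ and $\log^2 n$ in the bound would come, respectively, from this enumeration and from union bounds together with Gaussian tail estimates for the East-process front.

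The main obstacle is exactly this decoupling step in the finalizing window: making rigorous the intuition that coin flips shared between the dual of column $n$ and those of the already-mixed smaller columns decorrelate. The argument would have to simultaneously invoke (i) the sharp $O(\sqrt n)$ front concentration from Theorem \ref{th:main2}, (ii) a refined combinatorial analysis of the Peres--Sly dual restricted to a window of length $O(\sqrt n\log^2 n)$, and (iii) near-equilibrium properties of the smaller columns turning approximate independence of duals into a total-variation bound. The exponential $2^k$ factor appears intrinsic to the recursive scheme (arising from enumerating $2^k$ dual patterns at the top of the tracked columns), and is apparently the reason the theorem handles only finitely many columns.
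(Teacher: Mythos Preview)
Your inductive skeleton matches the paper's, but the decoupling step you flag as the ``main obstacle'' is in fact a genuine gap, and the mechanism the paper uses is different from the one you sketch.

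The paper does \emph{not} try to argue that shared coin flips decorrelate because they are multiplied by ``nearly Bernoulli'' factors coming from near-equilibrium columns. Instead it isolates coin flips that are \emph{exactly} independent of the first $k-1$ columns. Concretely: if at a ring time $\tau$ of row $x-1$ the $x^{\rm th}$ row restricted to the tracked columns equals $(0,\ldots,0,1)$, then the Bernoulli variable $\xi$ at that ring adds $\xi\cdot(0,\ldots,0,1)$ to row $x-1$; this touches only column $i_k$ and leaves the evolution of columns $i_1,\ldots,i_{k-1}$ literally unchanged. Collecting all such $\xi$'s in a short time window, one writes ${\bf M}_{i_k}(t)$ as a $\hat\cF$-measurable vector plus $\sum_j\xi_j{\bf A}_j$, where $\hat\cF$ contains all randomness except the $\xi_j$'s, and the $({\bf A}_j)$ and the first $k-1$ columns are all $\hat\cF$-measurable. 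The adjoint-process argument then shows the ${\bf A}_j$ span the relevant coordinate block, so conditionally on $\hat\cF$ that block of column $i_k$ is exactly uniform and independent of the other columns. This is an exact-conditional-independence argument, not an approximate-decorrelation one; your averaging heuristic does not obviously lead to a TV bound.

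A second point you miss is what the induction hypothesis must actually supply. To guarantee that the pattern $(0,\ldots,0,1)$ occurs at many rows, the paper needs near-equilibrium not only of $({\bf M}_{i_1},\ldots,{\bf M}_{i_{k-1}})$ but simultaneously of the $(k-1)$-tuples $({\bf M}_{i_1},\ldots,{\bf M}_{i_{k-2}},{\bf M}_{i_k})$, $({\bf M}_{i_1},\ldots,{\bf M}_{i_{k-2}},{\bf M}_{i_{k-1}}+{\bf M}_{i_k})$ and $({\bf M}_{i_1},\ldots,{\bf M}_{i_{k-2}})$ (see Lemma~\ref{lem:mixing}). The linearity of the walk makes the sum column ${\bf M}_{i_{k-1}}+{\bf M}_{i_k}$ again a column process of the same type, so all of these are covered by the induction hypothesis provided one inducts on $\tmix(n,k-1,\eps):=\max_{j_1<\cdots<j_{k-1}}\tmix(n;[j_1,\ldots,j_{k-1}],\eps)$, not on the specific tuple $(i_1,\ldots,i_{k-1})$ as in your sketch. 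Finally, the passage from local mixing to global mixing is a \emph{spatial} telescoping over $\sqrt n$ consecutive row-blocks of size $\sqrt n$ (Lemma~\ref{mixtheorem}), each block handled by the local argument above; it is not a single temporal ``finalizing window''. The factor $2^k$ in the final bound comes from the density $2^{-k}$ of the special pattern (forcing the window length $\kappa\Delta\asymp 2^k\sqrt n$), not from enumerating $2^k$ dual patterns.
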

The $\eps$ dependence in the statement of the theorem is hidden in the choice of large enough $n$. A comparable lower bound $\tmix(n,\eps)\ge n/v_* +
O\left(\Phi^{-1}(1-\eps)\, \sqrt{n}\right)$ was stated right after Theorem \ref{th:main2}.

\begin{remark} Note that the dependence on $k$ is exponential and
  hence one can only hope to push this method of proof to prove sharp
  mixing,  up to  $k=\log (n)/2$ (ignoring smaller order terms)
  dimensional projections. The theorem holds also for $i_k=(1-o(1))n$
  but then the error term beyond $n/v_*$ will depend on the detailed
  form of the error term $o(1)$.
\end{remark}
\begin{remark}
Our method can be easily extended to cover the case when the entries
of the matrices belong to the field $\bbF_q$, with $q$ a prime, and
the matrix walk is as follows (cf.  \cite{peressly}): at rate one,
the $(i+1)^{th}$-row is multiplied by a uniformly chosen element of
$\bbF_q$ and added to the $i^{th}$-row where $i\in [1,2,\ldots,n-1]$. In this setting, for any
column, the projection obtained by denoting each
non zero entry by $1$, performs an East process with density $(q-1)/q$
\cite{peressly}*{Section 2.1}.  In this case we have the analog of the
above theorems with the relaxation time $\trel^{\rm East}(i_k-1)$ and
the velocity $v^*$ replaced by the corresponding quantities for the
new East process.  
\end{remark}
\subsection{Numerical Results:} \label{numerics}
In \cite{GLM} it was proved that the front (the rightmost one in the East process, (see \eqref{front}) for formal definition) behaves like a random walk with  velocity $v_*$ and has gaussian concentration. 
The figure above simulates the front process.
\begin{figure}[t]
\centering
\begin{tabular}{cc}
\includegraphics[width=.5\textwidth]{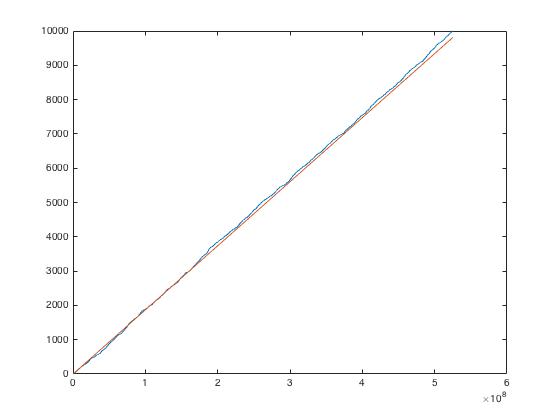} & 
\includegraphics[width=.5\textwidth]{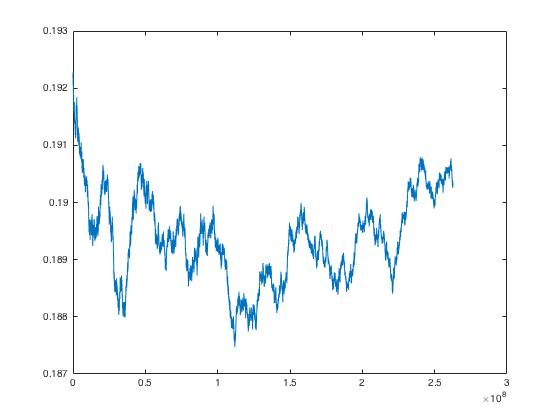}\\
(i) & (ii)
\end{tabular}
\caption{(i) The location of the front  of an East process (the rightmost one) for $p=\frac12$ en-route from $1$ to $10^4$ and the straight line approximating its trajectory under discrete time dynamics where at every discrete time the east process dynamics is performed at an uniformly chosen site in $[10^4]$. (ii) Average velocity of the front (location/time) along its journey (on a discrete time interval of order $10^8$: note that the discrete time dynamics on an interval on length $10^4$ is slower than the continuous time dynamics by a factor $10^4$). The above figures are scaled taking this into account.}
\label{fig:front}
\end{figure}

We ran an experiment on Matlab where the MW is run starting from the $1000 \times 1000$ identity matrix.  The statistic we keep track of is the rank of the rectangular block comprised of rows $[1,\ldots, 333]$ and columns  $[747, \ldots ,1000]$. It is well known that a rectangular block of size $m \times n$ with $m<n$ and independent ${\rm{Ber}}(1/2)$ entries has rank $m$ with probability approximately $1- 2^{n-m}.$ Thus our block only has has a chance of around $2^{-20}$ of having rank less than $333$ under equilibrium. The above figure shows the velocity  of the front to be in the window $[.185,.195]$ (which implies that the mixing time in discrete time steps  for the last column of the matrix is at most $(1/.18) 10^6$ taking into account fluctuations). We ran the MW for $(1/.15) 10^6$ discrete time steps and found the rank to be less than $200$ for five independent rounds.  The numbers are chosen to ensure that $(1/.15) 10^6$ is indeed larger than $(1/.18)10^6$ taking into account fluctuations. 
\section{Main ideas and organization of the article}
In this section we briefly describe the key ideas behind the proofs of the main results and how the article is organized. The proof of Theorem \ref{finitecutoff} is based on a recursive argument.  The case $k=1$ (the East process) is the content of \cite{GLM}. Assuming the theorem for $k$ we sketch how to prove it for $k+1.$ It suffices to consider the matrix $A=\cM_{[n-k, n-k+1, \ldots, n]},$ (see \ref{sec:main results}).
Note that the column evolutions are highly correlated and at a high level the approach is to find combinatorial situations which help us decouple the columns: for eg: if at any particular time $t$ when the $i^{th}$ row of $A$ is $[\underset{k \text{ times}}{\underbrace{0,\dots,0}},1],$ i.e.
$$\bigl({\textbf M}_{n-k}(t;i),\dots,{\textbf M}_{n}(t;i)\bigr)=(0,\ldots,0,1),$$ the $(i-1)^{th}$ row is updated by adding the $i^{th}$ row times a Bernoulli(1/2) variable $\xi_{i}(t)$, then forever in the future the random bit $\xi_{i}(t)$ only affects the last column and is independent of the first $k$ columns.  We then crucially use the underlying linearity in the dynamics (see discussions after Remark \ref{linear34}) to show that under the assumption that the following marginals on collection of $k$ columns, 
$$[\textbf {M}_{n-k},\dots,{\textbf M}_{n-1}], [\textbf {M}_{n-k},\dots,{\textbf M}_{n-2}{\textbf M}_{n}], [\textbf {M}_{n-k},\dots,{\textbf M}_{n-2},{\textbf M}_{n-1}+{\textbf M}_{n}]$$  where the addition is mod $2$, are well mixed, such patterns occur frequently at various rows which are not too far apart.  
Thus, at all the clock rings at such rows at such times, the last
column receives random bits which are independent from the first $k$
column evolutions (see discussion after \eqref{ringtimes1}). In order to use
these bits to prove that indeed the last column mixes independently of the
first $k$ columns, we rely on a local version of the argument appearing
in \cite{peressly}. The heart of this argument is based on the crucial
observation that a natural adjoint process to the upper triangular
matrix walk also has the law of an East process.  This fact, coupled with concentration results on the movement of the front (the rightmost one of the East process in infinite volume) obtained by refining results appearing in \cite{GLM}, is enough to conclude the proof. 

Theorem \ref{thm:trel} is proved by determining the asymptotic exponential decay rate of the total variation distance from equilibrium measure and relating it to the so called `persistence function' of the one dimensional East process. We then use a crucial input from \cite{chleboun}, which relates the persistence function to spectral gap. This is done in Section \ref{proof345}.

In Section \ref{prelim203} we define rigorously the underlying noise space, the primal and the adjoint processes, and collect various results which are crucially used, in particular results about combinatorial patterns helping mixing are discussed in Section \ref{pattern} and concentration results are discussed in Section \ref{sec:East}. The observation about the adjoint process relating it to the East process is discussed in Section \ref{adpr}. The local mixing results and the complete proof of Theorem  \ref{finitecutoff} appears in Section \ref{local245}.
\section{Preliminary results}\label{prelim203} 
In this section we collect several results that will be used for the
proof of the main results. 
\subsection{ Graphical construction, primal and adjoint maps}\label{formal12}
To
each $x\in \{1,\ldots, n-1\}$ we associate a rate one Poisson process and,
independently, a family of independent $\frac 12$-Bernoulli  random variables
$\{\xi_{x,k} : k \in \N\}$. The occurrences of the Poisson process
associated to $x$ will be denoted by $\{t_{x,k} : k \in \N\}$ and we
assume independence as $x$ varies. That fixes the
probability space $\O_n$. Notice that almost surely all
the occurrences $\{t_{x,k}\}$ as $x$ varies are different.

On $\O_n$ we construct a Markov chain with
state space $\cM_n$ according
to the following rules. Starting from $M\in \cM_n$ and given $\o\in \O_n$, at each time $t=t_{x,k},$ we update the state
of the chain $M(t^-)$ to
$M(t)=\left(\bbI+\xi_{x,k}E^{x,x+1}\right) M(t^-)$, i.e.  we add to
the $x^{th}$-row of $M(t^-)$ the $(x+1)^{th}$-row multiplied by the 
Bernoulli variable $\xi_{x,k}.$ It is easy to check that the above rule
defines a Markov chain $\{M(t;\o)\}_{t\ge 0}$ whose generator
coincides with that defined in \eqref{eq:gen}.

 Given $\omega\in \O_n$, a \emph{row} interval
 $I=[i,i+1,\dots,j]\subset [n]$ and a \emph{time} interval $\D=[s,t]$,
 $0<s<t,$ we denote by $\o_{I}(\D)$, the part of $\omega$ (\ie
 the clock rings and the Bernoulli variables) restricted to indices in
 the interval $I$ and clock rings in the time interval $\D$. More explicitly
\begin{equation}\label{variables}
\o_{I}(\D):=\left\{(t_{x,k}, \xi_{x,k}):\  x\in I, t_{x,k} \in \D \right\}.
\end{equation}
If $I=[n]$ we will simply write $\o(\D)$. We will denote by $\cF_{I}(\D)$ the $\s$-algebra generated by
the above variables and by $\tau_1 < \tau_2<\ldots \tau_m $ the
successive rings of $\o_{I}(\D)$. The position and the
Bernoulli variable corresponding to the $k^{th}$-ring $\t_k$ will be
denoted $x_k$ and $\xi_k$ respectively.  
Later on, we will need an extension of this space
where the finite set $[n]$ is replaced by the whole lattice $\bbZ$. In that case
the interval $I$  can be any finite interval in $\bbZ$.

%
\begin{definition}[Primal and adjoint map]
\label{prim-adj}
Given a row and time intervals $I,\D$ and $\o\in \O_n$,
we define the primal map
$\Phi_{I,\D}(\o,\cdot):\cX_n\mapsto \cX_n$ by    
\begin{equation}
\label{primal}
\Phi_{I,\D}(\omega, {\textbf Y})= (\bbI
+\xi_{m}E^{x_{m},x_{m}+1})\cdot \dots \cdot (\bbI
+\xi_{1}E^{x_{1},x_{1}+1}){\textbf Y}.
\end{equation}
The adjoint map $\Phi^*_{I,\D}(\o,\cdot): \cX^\top_n \to \cX^\top_n$ is
given instead by
\begin{equation}
  \label{dual}
\Phi^*_{I,\D}(\o,{\textbf Y}^\top)= {{\textbf Y}}^\top (\bbI
+\xi_{m}E^{x_{m},x_{m}+1})\cdot \dots \cdot (\bbI
+\xi_{1}E^{x_{1},x_{1}+1}).
\end{equation}
The maps $\Phi_{I,\D}(\o,\cdot),\Phi^*_{I,\D}(\omega,\cdot)$ can be naturally extended to act on
any matrix $M\in \cM_n$ by setting the $i^{th}$-column of
$\Phi_{I,\D}(\omega, M)$ equal to $\Phi_{I,\D}(\omega, {\textbf M}_i)$
and  the $i^{th}$-row of
$\Phi^*_{I,\D}(\omega, M)$ equal to $\Phi^*_{I,\D}(\omega, {\textbf
  R}^\top_i),$ where ${\textbf R}_1^\top,\dots,{\textbf R}^\top_n$ are the row vectors of $M$.
\end{definition}
\begin{remark} \label{linear34}
Note that the adjoint map is appropriately named since for any ${\textbf Z}^{\,\top},{\textbf Y}$,
\begin{equation}\label{eastdual}
{\textbf Z}^{\,\top}\phi_{I,\D}(\o,{\textbf Y}) =\phi^*_{I,\D}(\o,{\textbf Z}^{\,\top})\cdot {\textbf Y}.
\end{equation}
\end{remark}
If $I=[n]$ and $\D=[0,t]$ then, for any $M\in \cM_n\,$,
    $\Phi_{I,\D}(\o,M)=M(t;\o)$ where $M(t;\o)$ is the matrix
    determined by the graphical construction such that $M(0;\o)=M$. In
    particular the MW has the remarkable linearity feature,
    namely the evolution starting from $M+M'\in \cX_n$ is the sum (in $\cX_n$) of
    the evolutions starting from $M$ and $M'$ separately.
    
The matrix $\phi^*_{I,\D}(\o,M)$ coincides instead with the
matrix $\hat M(t;\o)$ constructed as follows. Let 
$m=m(\o,t)$ be the number of rings in $\o$ before $t$ and define the
adjoint randomness $\o^*=\o^*_t$ as the collection of times $t^*_{x,k}:=t-t_{x,m+1-k}$,
$k\le m$, and Bernoulli$(1/2)$ variables
$\xi^*_{x,k}:=\xi_{x,m+1-k}$. Then,
starting
from $\hat M(0;\o)= M$, at each time $t^*_{x,k}$ the matrix $\hat
M(t_{x,k}^*;\o^*)$ is updated by adding $\xi^*_{x,k}\ \times \
(x_{m+1-k})^{th}$-column to the $(x_{m+1-k}+1)^{th}$-column. 
For consistency we think that the ring at time $t^*_{x,k}$ is associated
to the the $(x_{m+1-k}+1)^{th}$-column. 
Clearly,
for any $i\in [n]$ and any $s\le t$, the $i^{th}$-row of $\hat M(s;\o^*)$ has the
first $i-1$ entries fixed equal to $0$, the $i^{th}$-one equal to $1$
and the other ones evolving as the East process on $[n-i-1]$ in the
time interval $[0,t]$. 

The graphical construction together with the linearity of the MW
allows the following representation of the marginal process on
e.g. the last column, a cornerstone for  \cite{peressly} and also for
us (cf. the proof of Proposition \ref{localmixing1} in Section
\ref{localmixingproofs}).

Fix $i\in [n-1]$ and a set of columns $\cC\subset [n]$. Without
loss of generality we assume that the last column of $\cC$ is the
$n^{th}$-one. Given a time interval $[t_1,t_2]$ let 
\begin{equation}\label{ringtimes1}
t_1< \tau_1 < \tau_2 < \ldots \tau_\nu< t_2 < \tau_{\nu+1}
\end{equation}
 be those rings $t_{i,k}$ in the graphical construction such
 that $t_{i,k}\in [t_1,t_2]$ and the $(i+1)^{th}$-entries of the
 columns in $\cC$ satisfy some a priori condition (e.g. they are  all equal to zero except the last one). Let $\xi_j\equiv \xi_{i,\tau_j}$ be the Bernoulli variable associated
to $\tau_j$ and let $\hat \cF_t$, $t\ge t_2$, be the $\s$-algebra
generated by all the variables generating $\cF_{[1,n]}([0,t])$ (see \eqref{variables}) except
the variables $\{\xi_j\}_{j=1}^\nu$. Let also $\a_j$ be the indicator
function that $M_{i+1,n}(\tau_j)=1$. Then
\begin{equation}
  \label{eq:linear}
  {\textbf M}_n(t;\o)= {\textbf A}_0(t;\o) +\sum_{j=1}^\nu \a_j(\o) \xi_j(\o) {\textbf A}_j(t,\o),
\end{equation}
where ${\textbf A}_0(t;\o)={\textbf M}_n(t;\hat \o)$ with $\hat\o$ obtained
from $\o$ by removing all the rings in \eqref{ringtimes1} at the $i^{th}$-row between time $t_1$ and
$t_2$ and, for any $j\in[\nu]$ such that $\a_j=1$,
${\textbf A}_j(t;\o)=\Phi_{I,\D_j}(\o,{\textbf e}_i)$ with $I=[1,i-1]$ and $\D_j=(\tau_j,t]$. The important feature of
$\{({\textbf A}_0,{\textbf A}_j,\a_j)\}_{j=1}^\nu$ is that they \emph{do not depend} on the Bernoulli variables $\{\xi_j\}_{j=1}^\nu$. 

The last, purely deterministic, result exploiting the interplay between the primal and
adjoint processes is as
follows (cf. the proof of Proposition 3.2 in \cite{peressly}). 
\subsubsection{The adjoint process argument}
\label{adpr} Given
$\o\in \O_n$ and $t>0$, choose an interval
$I=[a,b]\subset [n]$ together with a subset $t_1<t_2<\dots< t_k$ of the
Poisson times $\{\t_i\}_{i=1}^{m(t)}$ such that each $t_i$ corresponds
to an update of some row in $I$.
Define ${\textbf X}_j$ to be the restriction to $I$ of the vector
$\Phi_{I,[t_j,t]}(\o,{\textbf e}_b)$ (i.e. the column formed by the co-ordinates in $I$). The vectors
$\{{\textbf X}_j\}_{j=1}^k$ span the whole space $\cX_{b-a+1}$ iff for all
non-zero row vector ${\textbf Y}^\top\in \cX_{b-a+1}^\top$ there
exists $j\in [k]$ such that  ${\textbf Y}^\top \cdot {\textbf X}_j =1$. Using the adjoint
map we can rewrite ${\textbf Y}^\top \cdot {\textbf X}_j$ as follows. 

Let ${\textbf Z}^\top_x={\textbf Y}_x$
for $x\in I$ and ${\textbf Z}^\top_x=0$ otherwise. Then
\[
 {\textbf Y}^\top \cdot {\textbf X}_j ={\textbf Z}^\top \cdot \Phi_{I,[t_j,t]}(\o,{\textbf e}_b)=
 \Phi^*_{I,[t_j,t]} (\o,{\textbf Z}^\top)(b).
\]
Using the connection between $\Phi^*_{I,[t_j,t]} (\o,{\textbf Z}^\top) $ and the East process on the rows (cf. the discussion after \eqref{eastdual}), we conclude that
the vectors $\{{\textbf X}_j\}_{j=1}^k$ span the whole space iff for
all row vector ${\textbf Z}^\top$ which is identically equal to zero
outside $I$ there exists $j\in[k]$ such that the (graphical representation of) East process on $[n]$, evolving with the ``adjoint'' randomness $\o^*$ and initial condition $\textbf Z$, at time $t-t_j$ is equal to $1$ at the vertex $b$.


\subsection{Chernoff bounds for continuous time  Markov chains}
We give here a slight generalization of \cite{Lezaud}*{Theorem 3.4} proving Chernoff bounds for the
time average of continuous time Markov chains. 
\begin{lemma}
\label{LDPthm} Let $\{w(t)\}_{t\ge 0}$
  be a  continuous time finite Markov chain with reversible measure
  $\mu$. Assume that $\mu$ is positive and that the chain has a
  positive spectral gap $\g$. Let $A\subset [0,t]$ be of the form
$A=\cup_{i=1}^N [s_i,t_i]$, where $t_{i-1}\le s_i<t_i$, and set $|A|=\sum_i (t_i-s_i)$.
Then for any $\d\in (0,1)$ and any $f$ with $||f||_{\infty}\le 1,$ $\mu(f)=0, \mu(f^2)\le b^2,$
\begin{equation}
\label{LDP1}
\max_{w_0}\bbP_{w_0}\left[\bigl|\int_{0}^{t} f(w(s)){\textbf 1}(s\in
  A)ds\bigr|\ge \d |A|\right] \le \frac{2}{\mu_{\min}} \exp\left(-\gamma \delta^2|A|/(1+2b)^2 \right),
\end{equation}
where $\mu_{\min}= \min_{w\in \O}\mu(w)$ and $\bbP_{w_0}(\cdot)$ is the law of the chain starting from $w_0$. 
\end{lemma}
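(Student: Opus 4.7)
The plan is to adapt Lezaud's proof of Chernoff bounds for continuous-time Markov chains, which handles the case $A=[0,t]$, to a general union of intervals. The new feature is the presence of ``gaps'' on $[0,t]\setminus A$, and the key point is that the ordinary Markov semigroup $P_\tau=e^{\tau L}$ (with $L$ the generator of $\{w(t)\}$) acts as an $L^2(\mu)$-contraction, so inserting $P_\tau$ factors between the weighted semigroup factors costs nothing. After the standard reduction to the one-sided bound $\bbP_{w_0}[F(t)\ge \delta|A|]$ for $F(t):=\int_0^t f(w(s))\mathbf{1}(s\in A)\,ds$, by replacing $f$ with $-f$ (this yields the prefactor $2$), the main step is to establish the operator representation
\[
\bbE_{w_0}\bigl[e^{\theta F(t)}\bigr]=\bigl(P_{s_1}R^\theta_{t_1-s_1}P_{s_2-t_1}R^\theta_{t_2-s_2}\cdots R^\theta_{t_N-s_N}P_{t-t_N}\mathbf{1}\bigr)(w_0),
\]
where $R^\theta_\tau=e^{\tau(L+\theta f)}$ is the Feynman--Kac semigroup with $(R^\theta_\tau g)(w)=\bbE_w[g(w(\tau))\exp(\theta\int_0^\tau f(w(s))\,ds)]$. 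This identity is obtained by conditioning iteratively at the endpoints $s_i$ and $t_i$ and using the Markov property, together with the observation that $f$ contributes nothing to the integral on the complementary intervals.

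Next, since $L+\theta f$ is self-adjoint on $L^2(\mu)$, the $L^2(\mu)$ operator norm of $R^\theta_\tau$ equals $e^{\tau\lambda(\theta)}$, where $\lambda(\theta)$ is the top eigenvalue of $L+\theta f$, while $\|P_\tau\|_{L^2\to L^2}=1$. Submultiplicativity then bounds the full operator norm by $e^{|A|\lambda(\theta)}$. Lezaud's variational estimate for $\lambda(\theta)$, which uses only the hypotheses $\mu(f)=0$, $\mu(f^2)\le b^2$, $\|f\|_\infty\le 1$, gives a bound of the form $\lambda(\theta)\le \theta^2 b^2/(\gamma-2b\theta)$ in an appropriate range of $\theta$; this spectral input is exactly the one already used in \cite{Lezaud}*{Theorem 3.4} for $A=[0,t]$, so no new estimate is needed.

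To convert the $L^2$ operator bound to a pointwise bound on the starting state, I will use the $L^1$-$L^\infty$ pairing
\[
(T_\theta \mathbf{1})(w_0)=\int \frac{\mathbf{1}_{\{w_0\}}(w)}{\mu(w)}(T_\theta \mathbf{1})(w)\,d\mu(w)\le \mu_{\min}^{-1}\|T_\theta \mathbf{1}\|_{L^1(\mu)}\le \mu_{\min}^{-1}e^{|A|\lambda(\theta)},
\]
which produces the $\mu_{\min}^{-1}$ prefactor matching the statement. Markov's inequality then gives $\bbP_{w_0}[F(t)\ge \delta|A|]\le \mu_{\min}^{-1}\exp(|A|(\lambda(\theta)-\theta\delta))$, and the stated exponent $-\gamma\delta^2|A|/(1+2b)^2$ follows from the standard quadratic optimization over $\theta$. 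The main obstacle is really only the bookkeeping: once one recognizes that the contracting $P_\tau$ factors on the gaps do not enlarge the effective exponential rate in the operator norm, the rest of the argument is a line-by-line replay of Lezaud's original proof.
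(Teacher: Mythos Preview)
Your approach is essentially the paper's: both use the Feynman--Kac factorization
\[
\bbE_\mu\bigl[e^{\theta F(t)}\bigr]=\langle 1,\,e^{s_1 L}e^{(t_1-s_1)(L+\theta f)}e^{(s_2-t_1)L}\cdots e^{(t_N-s_N)(L+\theta f)}e^{(t-t_N)L}1\rangle,
\]
the contraction $\|e^{\tau L}\|_{L^2\to L^2}=1$ on the gap intervals, and a variational bound on the top eigenvalue $\lambda(\theta)$ of $L+\theta f$. The only substantive differences are cosmetic: the paper first reduces to the stationary initial law via $\bbP_{w_0}(E)\le \mu_{\min}^{-1}\bbP_\mu(E)$ (your $L^1$--$L^\infty$ pairing achieves the same factor), and the paper does \emph{not} quote Lezaud's rational bound $\lambda(\theta)\le \theta^2 b^2/(\gamma-2b\theta)$ but instead carries out a two-line direct estimate: writing $h=\alpha 1+g$ with $\mu(g)=0$, one gets $\langle h,(L+\theta f)h\rangle\le -\gamma(1-\alpha^2)+\theta\sqrt{1-\alpha^2}+2b\theta\alpha\sqrt{1-\alpha^2}\le \theta^2(1+2b)^2/(4\gamma)$, which is genuinely quadratic in $\theta$ and, after choosing $\theta=2\gamma\delta/(1+2b)^2$, yields exactly the exponent $-\gamma\delta^2/(1+2b)^2$. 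Your quoted Lezaud bound is not quadratic and does not optimize to precisely this constant, so if you want the exponent \emph{as stated} you should replace that citation by the paper's short variational computation; otherwise your argument is correct and identical in structure.
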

\begin{proof}For any event $E$ in path space $ \max_{w_0}\bbP_{w_0}(w(\cdot)\in E)\le \frac{1}{\mu_{\min}}
 \bbP_\mu(w(\cdot)\in E)$, so that
it suffices to prove that 
\begin{equation}
  \label{LDP2}
\bbP_\mu\left(\int_0^t ds\,f(w(s)){\textbf 1}(s\in
  A)  \ge \delta
  |A|\right)\le \exp\left(-\gamma \delta^2|A|/(1+2b)^2 \right). 
\end{equation}
The exponential Chebyshev inequality gives 
\[
 \bbP_\mu\left(\int_0^t ds\,  f(\h(s)){\textbf 1}(s\in
  A) \ge \delta
  |A|\right)\le e^{-\lambda \delta |A|}{\bbE}_\mu\left(e^{\lambda \int_0^t ds\, f(\h(s)){\textbf 1}(s\in
  A)}\right),\quad \l>0. 
\]
Let $H_\lambda$ be the self-adjoint operator on $\mathcal H=L^2(\mu)$
given by $H_\lambda= \mathcal L +\lambda f$, where $\cL$ is the
generator of the chain. If $\langle \cdot,\cdot\rangle$
denotes the scalar product in $\mathcal H$ then
\[
{\bbE}_\mu\left(e^{\lambda \int_0^t ds\, f(\h(s)){\textbf 1}(s\in
  A)}\right)=
\langle 1, e^{s_1\mathcal L }e^{(t_1-s_1) H_\lambda}e^{(s_2-t_1)\mathcal L}\dots e^{(t_N-s_N) H_\lambda}e^{(t-t_N)\mathcal L}1\rangle.
\]
Since $\|e^{s\mathcal L}\|=1$ we get that 
\[
{\bbE}_\mu\left(e^{\lambda \int_0^t ds\, f(\h(s)){\textbf 1}(s\in
  A)}\right)\le \exp\left(|A|\beta(\lambda)\right),
\]
where $\beta(\lambda)$ denotes the supremum of the spectrum of
$H_\lambda$.

We now bound from above $\beta(\lambda)$. For any function $h\in
\mathcal H$ with $\langle h,h\rangle=1$ we write $h=\alpha 1 +g$, so
that $\pi(g)=0$ and $\langle g,g\rangle=1-\a^2$. Then
\begin{gather*}
\langle h, H_\lambda h \rangle= \langle g,\mathcal L g\rangle +
\lambda\langle g,f g\rangle +2\alpha \lambda \langle 1,f g\rangle\\
\le -\gamma (1-\alpha^2)+ \l \sqrt{1-\a^2}+ 2b \l\a \sqrt{1-\a^2}\le \frac{\l^2(1+2b)^2}{4\g},
  \end{gather*}
which implies $\beta(\lambda)\le \frac{\l^2(1+2b)^2}{4\g}$. In conclusion 
\[
 \bbP_\mu\left(\int_0^t ds\,f(w(s)){\textbf 1}(s\in
  A)  \ge \delta
  |A|\right)\le e^{-\lambda \delta |A| +
  \frac{\l^2(1+2b)^2}{4\g}|A|}\le \exp\left(-\gamma \delta^2|A|/(1+2b)^2 \right),
\]
once we choose $\l= 2\d\g/(1+2b)^2$.
\end{proof}


\subsection{Concentration and mixing time for the East process}
\label{sec:East}
As we explained before the East process coincides with the marginal
process of each column ${\textbf M}^{(j)}(\cdot)$. It is therefore
important to develop a detailed analysis of its mixing time $\tmix^{\rm
  East}$. In this section we collect some refinements of the results
of \cite{GLM}*{Theorem 1}.  


We begin by considering the natural extension of the East process on
$\bbZ$ which is the unique Markov process on $\G=\{0,1\}^\bbZ$ with
(formal) generator\footnote{Notice that, for the reader's convenience,
  we adopt the convention that the $1$'s are the facilitating vertices,
  as it is the case for the upper triangular matrix walk, and not the
  $0$'s as it is
  customary in the East model literature.}  
\[
(\cL^{\rm East}f)(\eta)=\sum_{x\in
  \bbZ}\h_{x-1}\left[\,\bbE_x(f)(\eta)-f(\eta)\,\right],
\] 
where $\bbE_x(\cdot)$ denotes expectation over the variable
$\eta_x\in \{0,1\}$ with the fair Bernoulli distribution (see \cite{GLM}
for the formal definitions). 
In $\G$ we consider the set $\G^*$ of those  configurations such that the
variable 
\begin{equation}\label{front}
X(\eta):=
\sup\{x:\eta_x=1\}
\end{equation}
 is finite.
In the sequel, for any $\eta\in \G^*$  we will refer to $X(\eta)$ as
the \emph{front} of $\eta$. 

In order to understand the behavior of the process behind the front it
is convenient to move to an evolving reference frame in which the
front is always at the origin. More precisely let $\G_\Front$ denote
the set of configurations such that $X(\eta)=0$ and let
$\eta^\Front(\cdot)$ be the Markov process on $\G_\Front$
constructed as follows. The process behind the front evolves as the
usual East process. If the East dynamics tries to
move (by $\pm 1$) the front way from the origin then the whole configuration is shifted by $\mp 1$
in order to keep  to keep its front at the origin.
Blondel~\cite{Blondel} showed  that the law of $\eta^\Front(t)$
as
$t\to +\infty$ converges to an invariant measure $\nu$ on $(-\infty,-1]\cap \bbZ$ whose marginal on $(-\infty,-N]\cap \bbZ$ approaches
exponentially  fast (in $N$) the same marginal of the Bernoulli$(1/2)$ product measure. In the same limit she also
proved that $\frac1{t}
X(\eta(t)) $ converges in probability to $v_*$, where $v_*$ was defined right after Theorem \ref{th:main2}.
In \cite{GLM} Blondel's result was improved by establishing a
CLT for the variables $(X(\eta(t))-v_*t)/\sqrt{t})$ as $t\to +\infty$.

Using concentration properties for sums of  weakly
mixing sequence of random variables in Appendix \ref{concentration} we sharpen the CLT result of
\cite{GLM} as follows.
\begin{lemma} \label{concentration_front}There exists  universal constants $c,C$ such that for any $a> C\sqrt{t}\log^{3/2}(t),$ and $\eta(0) \in \G_{\Front},$
\begin{equation}\label{tailbound}
\bbP(|X(\eta(t))-v_*t|\ge a)\le e^{-c(\frac{a^2}{t})^{1/3}}.
\end{equation}
\end{lemma}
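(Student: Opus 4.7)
The plan is to decompose the front displacement into a martingale plus a compensator, control each piece using Lemma \ref{LDPthm} together with a standard martingale concentration inequality, and then optimize a truncation level to produce the $(a^2/t)^{1/3}$ exponent. Concretely, in the comoving frame $\eta^\Front(\cdot)$ I would write
\begin{equation*}
X(\eta(t)) - X(\eta(0)) \;=\; M(t) + \int_0^t F(\eta^\Front(s))\, ds,
\end{equation*}
where $F(\eta)$ is the expected infinitesimal drift of the front at state $\eta$ and $M(\cdot)$ is the associated compensated jump martingale. By Blondel's result \cite{Blondel} the process $\eta^\Front(\cdot)$ has invariant measure $\nu$ with positive spectral gap and $\nu(F)=v_*$, so the two pieces can be analyzed separately.

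For the compensator, the only real difficulty is that $F$ is unbounded: the front can be destroyed and then jump leftwards by a distance equal to the spacing to the next $1$, a random variable with $\nu$-exponential tails. Fix a truncation level $B$ and split $F = F_B + F_B^c$ with $F_B = F \cdot \mathbf 1_{|F|\leq B}$. For the bounded part I would apply Lemma \ref{LDPthm} to the rescaled centered function $(F_B - \nu(F_B))/B$, which has $L^\infty$ and $L^2(\nu)$ norms of order $1$, yielding a Gaussian-type bound of size $\exp\!\bigl(-c a^2/(B^2 t)\bigr)$. The remainder $F_B^c$ is supported on a set of $\nu$-mass $e^{-cB}$; a union bound over the $O(t)$ rings of the graphical construction, combined with a short initial burn-in that dissipates the dependence on $\eta(0)$ (using the exponential convergence of $\eta^\Front(\cdot)$ to $\nu$ behind the front established in \cite{Blondel}), bounds its contribution by $\exp(-cB)$ up to $\log t$ losses. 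The martingale $M(t)$ is treated in the same spirit: rightward jumps are always $+1$, while leftward jumps inherit the same exponential tails; truncating them at level $B$ leaves a martingale with jumps bounded by $B$ and predictable quadratic variation $O(t)$, to which Freedman's inequality applies and gives $\exp\!\bigl(-c \min(a^2/t, a/B)\bigr)$, while the truncation error again contributes at most $\exp(-cB)$.

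Balancing the dominant exponents $a^2/(B^2 t)$ and $B$ forces the choice $B \sim (a^2/t)^{1/3}$, producing the claimed bound $\exp(-c(a^2/t)^{1/3})$. The hypothesis $a \geq C\sqrt{t}\log^{3/2}(t)$ is sharp for this scheme: it ensures that the optimal $B$ is at least $C'\log t$, which is exactly what absorbs the $\log t$ losses coming from the union bound over $O(t)$ Poisson rings and from replacing a stationary initial condition by an arbitrary $\eta(0)\in\G_\Front$. The main obstacle is the unboundedness of the front's leftward jumps: without it, Lemma \ref{LDPthm} alone would deliver a Gaussian bound, and it is precisely the balance between Gaussian concentration of the truncated part and the exponential tail of the truncation error that is responsible for the non-Gaussian $1/3$ exponent.
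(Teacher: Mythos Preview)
Your overall strategy --- truncate the unbounded contribution at level $B$, obtain Gaussian-type concentration $\exp(-ca^2/(B^2t))$ for the bounded piece and $\exp(-cB)$ for the tail, then balance $B\sim(a^2/t)^{1/3}$ --- is exactly the mechanism the paper uses. The decomposition and the tools, however, differ in an essential way, and your version has a genuine gap.

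The gap is in the compensator step. You plan to apply Lemma~\ref{LDPthm} to the process $\eta^\Front(\cdot)$ and the function $(F_B-\nu(F_B))/B$. But Lemma~\ref{LDPthm} is stated for a \emph{finite, reversible} chain with positive spectral gap; the process as seen from the front lives on an infinite state space, is not reversible (the front shifts break detailed balance), and --- most importantly --- no spectral gap for it is established in the references you invoke. Blondel's result concerns exponential closeness of the \emph{spatial} marginals of $\nu$ to product measure, not exponential-in-time convergence of the law of $\eta^\Front(t)$ to $\nu$. The quantitative mixing input actually available from \cite{GLM} (their Corollary~3.2, quoted here as Lemma~\ref{cor:wf}) is only stretched-exponential decorrelation $O(e^{-\gamma n^\alpha})$ with $\alpha\in(0,1)$, which is strictly weaker than a spectral gap and does not feed into Lemma~\ref{LDPthm}.

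The paper sidesteps this by working in discrete time: it sets $\xi_n=X(\eta(n))-X(\eta(n-1))$, truncates each $\xi_n$ at level $K$, and forms the Doob martingale of $\sum_n X_n$ with respect to the natural filtration $\cF_i=\cF_{\bbZ}([0,i])$. The stretched-exponential decorrelation \eqref{eq:20tris} is precisely what is needed to bound the Doob martingale differences by $O(K)$ (the series $\sum_j K e^{-\gamma j^\alpha}$ converges), after which Azuma--Hoeffding gives $\exp(-cr^2/(K^2 n))$, and the truncation error contributes $O(ne^{-cK})$. Balancing $K^3=r^2/n$ yields the $1/3$ exponent, just as in your outline. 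So your intuition about the mechanism and about the role of the hypothesis $a\ge C\sqrt{t}\log^{3/2}t$ is correct, but to make the argument rigorous you should replace the appeal to Lemma~\ref{LDPthm} by the Doob-martingale route driven by the weak-mixing estimate \eqref{eq:20tris}.
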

\begin{corollary}\label{onecutoff} For any $\g>0$, all $\eps \ge
  1/n^\g$ and $n$ large enough,
  \begin{equation}
    \label{eq:2}
\tmix^{\rm East}(n,\eps) \le n/v_*+ \sqrt{n}\log^2(n).
  \end{equation}
\end{corollary}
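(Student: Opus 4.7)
My plan is to follow the strategy used in \cite{GLM} for the upper bound on $\tmix^{\rm East}$, but to replace the central limit theorem for the front position with the stretched-exponential tail of Lemma~\ref{concentration_front}. The new tail yields a super-polynomially small total variation error at time $t^*:=n/v_*+\sqrt{n}\log^2 n$, which beats any $n^{-\g}$ and hence gives \eqref{eq:2}.

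The first step is to couple the East process on $[n]$, starting from an arbitrary (worst-case) initial condition, with the infinite-volume East process on $\bbZ$ started from a configuration $\eta(0)\in\G_\Front$ whose initial front sits at position $n$. On the event $\{X(\eta(t^*))\ge n+\sqrt{n}\}$, the restriction of the infinite process to $[1,n]$ at time $t^*$ lies within total variation distance $e^{-c\sqrt{n}}$ of the Bernoulli$(1/2)$ product measure $\pi$ on $\{0,1\}^n$: this follows from Blondel's convergence of the front-centered process $\eta^\Front(t)$ to its invariant measure $\nu$, together with the exponential approach of the marginal of $\nu$ on $(-\infty,-N]\cap\bbZ$ to Bernoulli$(1/2)$ as $N\to\infty$ (both recalled in the paragraphs preceding Lemma~\ref{concentration_front}).

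Next, apply Lemma~\ref{concentration_front} at time $t^*$ with $a:=v_*\sqrt{n}\log^2(n)-\sqrt{n}$. The hypothesis $a>C\sqrt{t^*}\log^{3/2}(t^*)$ reduces to the easily verified bound $v_*^{3/2}\log^{1/2}(n)(1+o(1))>C$. Since $v_*t^*-a=n+\sqrt{n}$ and $a^2/t^*\asymp v_*^3\log^4(n)$, the lemma yields
\[
\bbP\bigl(X(\eta(t^*))<n+\sqrt{n}\bigr)\;\le\;\bbP\bigl(|X(\eta(t^*))-v_*t^*|\ge a\bigr)\;\le\;e^{-c'\log^{4/3} n}.
\]
Combining with the behind-the-front estimate, for any worst-case initial condition the East process on $[n]$ at time $t^*$ satisfies
\[
d_{TV}\bigl(\text{East on } [n] \text{ at time } t^*,\,\pi\bigr)\;\le\;e^{-c'\log^{4/3} n}+e^{-c\sqrt{n}}\;\le\;n^{-\g}
\]
for every fixed $\g>0$ and all $n$ large enough, since $\log^{4/3} n$ eventually dominates $\g\log n$. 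This gives $\tmix^{\rm East}(n,\eps)\le t^*$ whenever $\eps\ge n^{-\g}$, proving \eqref{eq:2}.

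The main technical point is the coupling in the first step, which must identify the East process on $[n]$ (with its facilitating left boundary at position $1$) with the behind-the-front window of the infinite-volume East process, and which must control the non-equilibrium contribution to the front-centered marginal at finite time $t^*$. This is standard in the East literature and was already implicit in the proof of the CLT-based upper bound in \cite{GLM}; the essentially new ingredient here is only the substitution of the Gaussian-CLT tail by the sharper tail from Lemma~\ref{concentration_front}.
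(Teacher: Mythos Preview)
Your overall plan---reduce to a front-crossing probability and then feed in the stretched-exponential tail of Lemma~\ref{concentration_front}---is exactly what the paper does, but the paper's execution is one line whereas yours takes an unnecessary detour through Blondel's invariant measure $\nu$. The paper simply cites the coupling inequality from \cite{GLM},
\[
d_{TV}\bigl(\eta(t),\pi\bigr)\;\le\;\bbP\bigl(X(\eta(t))<n\bigr),
\]
valid for any starting configuration of the East chain on $[n]$, and then plugs in \eqref{tailbound}. There is no $e^{-c\sqrt n}$ term and no need for the overshoot to $n+\sqrt n$.

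The point you are missing is that the GLM coupling is \emph{exact}, not asymptotic: under the graphical construction, if $\eta^{(0)}$ denotes the reference process with a single frozen $1$ at the boundary and all zeros on $[1,n]$, then for \emph{every} initial condition $\sigma$ one has $\sigma_x(t)=\eta^{(0)}_x(t)$ for all $x\le X(\eta^{(0)}(t))$, and hence $\sigma$ also agrees with the stationary chain on that range. This gives $d_{TV}(\sigma(t),\pi)\le\bbP(X(\eta^{(0)}(t))<n)$ directly, with no reference to the law behind the front. Your appeal to ``the exponential approach of the marginal of $\nu$ to Bernoulli$(1/2)$'' is therefore superfluous.

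As written, your argument also has genuine gaps. The phrase ``$\eta(0)\in\G_\Front$ whose initial front sits at position $n$'' is self-contradictory, since $\G_\Front$ is by definition the set of configurations with front at $0$. More seriously, Blondel's statement concerns the \emph{stationary} front-centered measure $\nu$; at the finite time $t^*$ the front-centered process has not yet reached $\nu$, and you additionally condition on $\{X(\eta(t^*))\ge n+\sqrt n\}$, which further perturbs the law. You defer both issues to the last paragraph as ``standard'' and ``implicit in \cite{GLM}'', but the upper bound in \cite{GLM} does \emph{not} proceed via $\nu$---it uses precisely the exact coalescence-behind-the-front coupling described above. Invoke that coupling instead, drop the $\sqrt n$ overshoot and the Blondel step, and your proof collapses to the paper's two-line argument.
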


\begin{proof}[Proof of the Corollary] 
A natural coupling argument of \cite{GLM} (cf. the beginning of
Section 4.2.
there) proves that,  for any $\eta\in \G^F$ and any $\eps\in (0,1)$, 
$$
d_{TV}(\eta(t),\pi) \le \bbP_{\o}(X(\eta(t))<n).
$$
The bound \eqref{eq:2} is now an immediate consequence of \eqref{tailbound}.
\end{proof}
\subsection{Special row patterns and their large deviations}
\label{pattern}
For reason that will appear clearly in the proof of Theorem \ref{th:main2}, given a set of $k$ columns of the matrix $M(t)$ it will be important to have a good control on the statistics at time $t$ of the number of rows where the entries of the chosen columns exhibit certain special patterns (e.g. they are all equal to $0$ or only the last one is equal to $1$, etc). This is the main motivation for what follows. 

Consider a $k$ tuple of random vectors $({\textbf X}_1,\ldots,{\textbf X}_k)$ with joint distribution $\cP$ and let 
\begin{equation}
  \label{eq:N}
\cN=\sum_{x=1}^{n} \textbf{1}\left(({\textbf X}_1(x),\dots,{\textbf X}_k(x))=(0,\ldots, 0,1)\right). 
\end{equation}
Thus $\cN$ counts the number of coordinates where all but the last vector are zero.
\begin{lemma}Let $\pi_{j}$ be the uniform measure on $\cX_n^j$. Then for any positive integer $k,$
\label{lem:mixing} 
\begin{align*} 
\cP(|\cN-n/2^k|\ge n/2^{k+1})&< 6 e^{-n/(9\, 2^{2k-1})}\\&+
d_{TV}(({\textbf X}_{1},\dots,{\textbf X}_{k-1}),\pi_{k-1})\\
&+
 d_{TV}(({\textbf X}_{1},\dots,{\textbf X}_{k-2},{\textbf X}_{k}),\pi_{k-1})\\
&+ d_{TV}(({\textbf X}_{1},\dots,{\textbf X}_{k-2},{\textbf X}_{k-1}+{\textbf X}_{k}),\pi_{k-1})\\
&+d_{TV}(({\textbf X}_{1},\dots,{\textbf X}_{k-2}), \pi_{k-2}).
\end{align*}
\end{lemma}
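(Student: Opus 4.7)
The plan is to carry out a Walsh--Fourier style expansion of the pattern indicator that splits $\cN$ into four pieces, each depending on exactly one of the four marginals listed in the bound. Set
$$A_x := \ind({\textbf X}_1(x)=\cdots={\textbf X}_{k-2}(x)=0),\quad u_x := (-1)^{{\textbf X}_{k-1}(x)},\quad v_x := (-1)^{{\textbf X}_k(x)},$$
and use the elementary identity
$$\ind\bigl(({\textbf X}_1(x),\dots,{\textbf X}_k(x))=(0,\dots,0,1)\bigr)=\tfrac14 A_x(1+u_x)(1-v_x)=\tfrac14 A_x\bigl(1+u_x-v_x-u_xv_x\bigr).$$
Summing over $x\in[n]$ yields $\cN=\tfrac14(T_0+T_1-T_2-T_3)$ with $T_0=\sum_x A_x$, $T_1=\sum_x A_xu_x$, $T_2=\sum_x A_xv_x$, $T_3=\sum_x A_x u_xv_x$. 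Because $u_xv_x=(-1)^{{\textbf X}_{k-1}(x)+{\textbf X}_k(x)}$, the random variables $T_0,T_1,T_2,T_3$ are deterministic functions of exactly the marginals $({\textbf X}_1,\dots,{\textbf X}_{k-2})$, $({\textbf X}_1,\dots,{\textbf X}_{k-1})$, $({\textbf X}_1,\dots,{\textbf X}_{k-2},{\textbf X}_k)$ and $({\textbf X}_1,\dots,{\textbf X}_{k-2},{\textbf X}_{k-1}+{\textbf X}_k)$, which are precisely the four marginals whose TV distances appear on the right-hand side.

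Under the uniform product law $\pi$ on $\cX_n^k$, the $k$-tuples indexed by $x$ are i.i.d., which gives $\E_\pi T_0=n/2^{k-2}$ and $\E_\pi T_i=0$ for $i=1,2,3$, so $\E_\pi \cN = n/2^k$. Each $T_i$ is a sum of $n$ i.i.d.\ bounded variables (in $\{0,1\}$ for $T_0$, in $\{-1,0,1\}$ for $T_1,T_2,T_3$, with per-coordinate variance $\Theta(2^{-k})$); Hoeffding's (or Bernstein's) inequality applied to a deviation of size $n/2^{k+1}$ therefore yields
$$\pi\bigl(|T_i-\E_\pi T_i|\ge n/2^{k+1}\bigr)\le 2\exp\bigl(-c\,n/2^{2k}\bigr)$$
for an explicit $c>0$; keeping track of constants (Chernoff for the Binomial $T_0$, Hoeffding for the mean-zero $T_1,T_2,T_3$) reproduces the exponent $-n/(9\cdot 2^{2k-1})$ and combines the four estimates into the prefactor $6$ stated in the lemma.

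To pass from $\pi$ to the actual law $\cP$, we use that each $T_i$ depends only on one of the four listed marginals, so the definition of total variation gives
$$\cP\bigl(|T_i-\E_\pi T_i|\ge n/2^{k+1}\bigr)\le\pi\bigl(|T_i-\E_\pi T_i|\ge n/2^{k+1}\bigr)+d_{TV}(\mathrm{marg}_i,\pi_i).$$
Finally, the event $|\cN-n/2^k|\ge n/2^{k+1}$ is equivalent to $|Y_0+Y_1-Y_2-Y_3|\ge n/2^{k-1}$ with $Y_i:=T_i-\E_\pi T_i$, which by the pigeonhole principle forces $\max_i|Y_i|\ge n/2^{k+1}$. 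A union bound over $i\in\{0,1,2,3\}$ then assembles the four Hoeffding estimates and the four TV terms into exactly the inequality claimed in the statement. The argument is conceptually straightforward; the only non-routine ingredient is the deterministic Fourier identity which aligns the four summands with the four specific marginals prescribed in the lemma, and the only delicate bookkeeping is tuning the concentration constants to recover the stated exponent.
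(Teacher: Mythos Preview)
Your argument is correct and is essentially the paper's own proof in Fourier-analytic clothing: the paper decomposes $\cN$ via the four pattern counts $\cN+\cN_1,\ \cN+\cN_2,\ \cN+\cN_3,\ \cN+\sum_i\cN_i$, which are invertible linear combinations of your $T_0,T_1,T_2,T_3$, and in both versions the key point is that each of the four pieces is a function of exactly one of the four listed marginals, so that Hoeffding under $\pi$ plus a TV correction handles each piece. One small caveat: with your threshold $n/2^{k+1}$ and plain Hoeffding on the $\pm1$-valued $T_1,T_2,T_3$ you do not literally recover the prefactor $6$ and the exponent $-n/(9\cdot2^{2k-1})$ (a union bound over four terms gives a leading $8$, and the Hoeffding exponent comes out as $-n/2^{2k+3}$), but the paper's own constants for general $k$ have the same slack, and the discrepancy is immaterial for the applications.
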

\begin{proof}  
For the ease of reading, we first discuss the case $k=2$.
Let $\cN_1,\cN_2,\cN_3$ be defined as the variable $\cN$ in
\eqref{eq:N} but with other possible patterns
of $({\textbf X}_{1}(x),{\textbf X}_{2}(x) )$ as specified below:
\begin{align}
\label{fig1}
\cN_1\leftrightarrow (1,0),\quad \cN_2\leftrightarrow (0,0),\quad \cN_3\leftrightarrow (1,1).
\end{align}
Clearly $\cN_1+\cN_2+\cN_3+\cN=n$ so that $|\cN-n/4|\le \sum_{i=1}^3
|\cN+\cN_i-n/2|/2$. In particular 
\begin{equation}\label{azumabnd1}
\cP(|\cN-n/4|\ge n/8)\le \sum_{i=1}^3 \cP(|\cN+\cN_i-n/2|\ge n/12).
\end{equation}
The following is a
standard consequence of the Azuma- Hoeffding's inequality  and
the definition of total variation distance. For any random vector
${\textbf X}\in \cX_n$  distributed according to $\mathcal{P}$ and $a>0$
\begin{equation}
\label{azuma1} 
\mathcal{P}(|\sum_{x=1}^{n}{\textbf X}(x)-n/2|\ge a/2)\le
2e^{-\frac{a^2}{2n}}+d_{TV}({\textbf X},\pi).
\end{equation}
This implies: 
\begin{align*}
\cP(|\cN+\cN_2-n/2|\ge a/2)& \le 2e^{-\frac{a^2}{2n}}+d_{TV}({\textbf X}_{1},\pi) \\
\cP(|\cN+\cN_3-n/2|\ge a/2)& \le 2e^{-\frac{a^2}{2n}}+d_{TV}( {\textbf X}_{2},\pi) \\
\cP(|\cN+\cN_1-n/2|\ge a/2)& \le 2e^{-\frac{a^2}{2n}}+d_{TV} ({\textbf X}_{1}+{\textbf X}_{2},\pi).
\end{align*}
Choosing $a=n/6$ and summing the above three inequalities we finally get by \eqref{azumabnd1}
\begin{equation}
  \label{eq:aux}
\cP(|\cN-n/4|\ge n/8)\le 6e^{-n/72} +\sum_{i=1}^3 d_i,
\end{equation}
where $d_1,d_2,d_3$ are the total variation distance terms above.
 
For general $k,$ we define $\cN_1,\cN_2, \cN_3$ as before according to
the following patterns of the array $({\textbf X}_{1}(x),\dots,{\textbf X}_{k}(x) )$. The first $k-2$ elements are equal to zero while the
last two elements $({\textbf X}_{k-1)}(x),{\textbf X}_{k}(x))$ are as in \eqref{fig1}. Thus 
\[
\cN \leftrightarrow (\underbrace{0,\dots,0}_{k-2},0,1),\; \cN_1 \leftrightarrow (\underbrace{0,\dots,0}_{k-2},1,0),\; 
\cN_2 \leftrightarrow (\underbrace{0,\dots,0}_{k-2},0,0),\; 
\cN_3 \leftrightarrow (\underbrace{0,\dots,0}_{k-2},1,1).
\] 
In this case $\cN+\sum_{i=1}^3\cN_i$ is not deterministic; however by triangle inequality we still get  
\[
|\cN-n/2^{k}|\le \frac 12 \left(\sum_{i=1}^3|\cN+\cN_i-n/2^{k-1}| +|\cN+\sum_{i=1}^3 \cN_i -n/2^{k-2}|\right).
\] 
Using again \eqref{azuma1} we have: 
\begin{align*}
\cP(|\cN+\cN_2-n/2^{k-1}|\ge a/2)& \le 2e^{-\frac{a^2}{2n}}+d_{TV}(({\textbf X}_{1},\dots,{\textbf X}_{k-1}),\pi_{k-1})\\
\cP(|\cN+\cN_3-n/2^{k-1}|\ge a/2)& \le 2e^{-\frac{a^2}{2n}}+d_{TV}(({\textbf X}_{1},\dots,{\textbf X}_{k-2},{\textbf X}_{k}),\pi_{k-1})\\
\cP(|\cN+\cN_1-n/2^{k-1}|\ge a/2)& \le 2 e^{-\frac{a^2}{2n}}+ d_{TV}(({\textbf X}_{1},\dots,{\textbf X}_{k-2},{\textbf X}_{k-1}+{\textbf X}_{k}),\pi_{k-1})\\
\cP(|\cN+\sum^3_{i=1} \cN_i -n/2^{k-2}|\ge a/2)& \le 2 e^{-\frac{a^2}{2n}}+ d_{TV}(({\textbf X}_{1},\dots,{\textbf X}_{k-2}), \pi_{k-2}).
\end{align*}
We finally get the thesis as in the $k=2$ case by summing the above
inequalities computed for $a=\frac 13 \frac{n}{2^{k-1}}$. \end{proof}
\subsubsection{Application to the matrix walk}
\label{sec:appl MW}
Let $\mu$ be a probability measure on $\cM_n$ and consider the MW $M(t)$ with initial measure $\mu$ and law $\bbP_\mu(\cdot)$. Given a set of $k$ columns with indices $i_1< i_2 <\dots< i_k$  we denote by $\mu_1,\mu_2,\mu_3,\mu_4$ the marginals of $\mu$ on
\[
({\textbf M}_{i_1},\dots ,{\textbf M}_{i_k-1}),\  ({\textbf M}_{i_1},\dots ,{\textbf M}_{i_{k-2}},{\textbf M}_{i_k}),\  ({\textbf M}_{i_1},\dots, {\textbf M}_{i_{k-1}}+{\textbf M}_{i_k}),\ ({\textbf M}_{i_1},\dots, {\textbf M}_{i_{k-2}}),\] respectively. 

\begin{definition}\label{good}
Given a time interval $\D=[t_1,t_2]$ and $k, n \in \N$, we say that
the $x^{\rm th}$-row is
\emph{good for $\D$} if 
\[
\int_\D\textbf{1}\Bigl(\bigl(\textbf{M}_{i_1}(t;x),\dots,{\textbf M}_{i_k}(t;x)\bigr)=(0,\ldots,0,1)\Bigr)dt \ge \frac{t_2-t_1}{2^{k+1}},
\]
otherwise we term it \emph{bad for $\D$}.  We will sometimes suppress the reference
to $\D$ if clear from the context. 
\end{definition} 
Recall that $\pi_{j}$ is the uniform measure on $\cX_n^j.$
The result that will play a crucial role in getting a sharp bound on the mixing time of the marginal of the MW on $k$ columns is as follows.  
\begin{lemma}\label{goodindex}
There exists a universal constant $c>0$ such that for any time
interval $[t_1,t_2]$, any $k, n \in \N$ and any probability measure $\mu $ on $\cX_n^k$  
\begin{gather*}
\bbP_{\mu}(\text{all rows are bad for $[t_1,t_2]$}) \\
 \le \exp\left(-c(t_2-t_1) \right)+ d_{TV}(\mu_1,\pi_{k-1})+d_{TV}(\mu_2,\pi_{k-1})+d_{TV}(\mu_3,\pi_{k-1})+d_{TV}(\mu_4,\pi_{k-2}). 
\end{gather*}
\end{lemma}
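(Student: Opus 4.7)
The plan is to translate the event ``all rows are bad'' into a statement about the time average of the count
$\cN(t)=\sum_{x=1}^{n}\textbf{1}((\textbf{M}_{i_1}(t;x),\dots,\textbf{M}_{i_k}(t;x))=(0,\ldots,0,1))$
of rows exhibiting the pattern at time $t$, and then to combine pointwise concentration of $\cN(t)$ furnished by Lemma~\ref{lem:mixing} with a Chernoff-type time-concentration furnished by Lemma~\ref{LDPthm}. Summing the definition of ``good'' over $x\in[n]$ shows that every row is bad for $[t_1,t_2]$ if and only if
\[
\int_{t_1}^{t_2}\cN(t)\,dt<\frac{n(t_2-t_1)}{2^{k+1}},
\]
so the task is to rule out that the time average $\bar{\cN}$ falls below $n/2^{k+1}$, while Lemma~\ref{lem:mixing} predicts a typical value $n/2^k$, twice as large.

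At each fixed $t$, Lemma~\ref{lem:mixing} applied to the joint law $\cP_t$ of $(\textbf{M}_{i_1}(t),\dots,\textbf{M}_{i_k}(t))$ bounds $\bbP_\mu(\cN(t)\le n/2^{k+1})$ in terms of the TV distances of four marginals of $\cP_t$ to the appropriate uniform measures. Crucially, each of these four marginal evolutions is itself a Markov chain --- the matrix-walk update ``add the $(i+1)^{\text{th}}$ row to the $i^{\text{th}}$ row times a shared Bernoulli'' is entirely local to any chosen subset of columns, so restricting to a subset preserves Markovianity --- and the corresponding TV distances are therefore contracted under the time evolution and dominated by the initial ones $d_{TV}(\mu_i,\pi_{\cdot})$. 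This yields a uniform-in-$t$ pointwise bound
\[
\bbP_\mu(\cN(t)\le n/2^{k+1})\le 6e^{-n/(9\cdot 2^{2k-1})}+\sum_{i=1}^{4}d_{TV}(\mu_i,\pi_{\cdot})=:p_0.
\]
To upgrade this to a bound on the time integral, I would apply Lemma~\ref{LDPthm} to the $k$-column marginal Markov chain --- whose spectral gap equals $1/\trel^{\rm East}(i_k-1)$ by Theorem~\ref{thm:trel} and is therefore bounded below by a universal constant --- with the centered indicator $f(M)=\textbf{1}(\cN(M)\le 3n/2^{k+2})-\pi_k(\cN\le 3n/2^{k+2})$ as observable. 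The slightly raised threshold $3n/2^{k+2}$, obtained from a trivial reparametrization of Lemma~\ref{lem:mixing}'s proof, serves to close the factor-of-$2$ gap between the averaged threshold $n/2^{k+1}$ and the stationary mean $n/2^k$: if the set $\{t:\cN(t)\le 3n/2^{k+2}\}$ has measure at most $(t_2-t_1)/3$, then the complementary time mass forces $\int\cN\,dt\ge(2/3)(t_2-t_1)(3n/2^{k+2})=n(t_2-t_1)/2^{k+1}$, contradicting ``all rows bad''.

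The main obstacle is the $1/\mu_{\min}$ prefactor in Lemma~\ref{LDPthm}, which for the $k$-column chain is exponentially large in $n$ and would overwhelm the Chernoff decay if the estimate were applied starting from a worst-case initial state. I would circumvent this by running the Chernoff bound starting from the stationary measure $\pi_k$ --- where the proof of Lemma~\ref{LDPthm} goes through without the $\mu_{\min}$ factor, using exactly the spectral argument there --- and absorbing the change of measure from $\mu$ to $\pi_k$ into the TV terms on the right-hand side of Lemma~\ref{goodindex}. This yields $\bbP_{\pi_k}(\text{the set }\{\cN\le 3n/2^{k+2}\}\text{ has measure }\ge (t_2-t_1)/3)\le e^{-c(t_2-t_1)}$ with $c>0$ depending only on the universal lower bound for the East-process spectral gap, and combined with the TV cost of the measure change this delivers the stated inequality.
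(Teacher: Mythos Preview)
Your reduction of ``all rows bad'' to $\int_{t_1}^{t_2}\cN(t)\,dt<n(t_2-t_1)/2^{k+1}$ is the right first move (though it is only an implication, not an equivalence). The genuine gap is in the last step, where you pass from $\mu$ to $\pi_k$ and claim the resulting TV cost is ``absorbed into the TV terms on the right-hand side.'' The cost of that change of measure is $d_{TV}(\mu,\pi_k)$, the full $k$-column distance, and there is no inequality bounding this by the four $(k-1)$- and $(k-2)$-column marginal distances $d_{TV}(\mu_i,\pi_{\cdot})$ appearing in the lemma. This is not cosmetic: in the induction proving Theorem~\ref{finitecutoff}, $d_{TV}(\mu,\pi_k)$ is precisely the quantity one is trying to bound, so a version of Lemma~\ref{goodindex} with $d_{TV}(\mu,\pi_k)$ on the right would be circular there.

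The paper's fix is to run the Chernoff bound not on the single observable $\textbf{1}(\cN\le\text{threshold})$ but separately on four observables $Y_i(s)=\textbf{1}(|\cN(s)+\cN_i(s)-n/2^{k-1}|\ge a/2)$ (and the analogous one for $\sum_i\cN_i+\cN$), in the notation of the proof of Lemma~\ref{lem:mixing}. The point is that each combination $\cN+\cN_i$ --- counting rows with patterns $(0,\dots,0,0,*)$, $(0,\dots,0,*,1)$, $(0,\dots,0,a,1{-}a)$, $(0,\dots,0,*,*)$ respectively --- is measurable with respect to just one of the four projection chains $(\textbf{M}_{i_1},\dots,\textbf{M}_{i_{k-1}})$, $(\textbf{M}_{i_1},\dots,\textbf{M}_{i_{k-2}},\textbf{M}_{i_k})$, $(\textbf{M}_{i_1},\dots,\textbf{M}_{i_{k-2}},\textbf{M}_{i_{k-1}}{+}\textbf{M}_{i_k})$, $(\textbf{M}_{i_1},\dots,\textbf{M}_{i_{k-2}})$. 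One can therefore apply~\eqref{LDP2} to each projection chain started from \emph{its own} stationary measure, paying only the marginal cost $d_{TV}(\mu_i,\pi_{k-1})$ (resp.\ $\pi_{k-2}$). On the event that all four time-integrals $J_i=\int Y_i$ are small, the triangle-inequality decomposition in Lemma~\ref{lem:mixing} forces $\cN(s)$ near $n/2^k$ for a large fraction of times $s$, and pigeonhole produces a good row. Your observable $\textbf{1}(\cN\le 3n/2^{k+2})$ is genuinely $k$-column measurable and admits no such splitting; you need to apply the time-concentration to the pieces $\cN+\cN_i$, not to $\cN$ itself.
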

\begin{proof} Fix $I=[t_1,t_2],k,n,\mu$ and recall the notation of Lemma \ref{lem:mixing} and of its
  proof. For $s\in I$ let $\cN(s),\ \cN_1(s),\ \cN_2(s),\ \cN_3(s)$ be the
  random variables defined after \eqref{eq:aux} computed for the
  vectors ${\textbf M}_{i_1}(s),\dots,{\textbf M}_{i_k}(s)$ and let for $a>0$ 
\begin{align*}
Y_1(s) &= \textbf{1}(|\cN(s)+\cN_1(s)-n/2^{k-1}|\ge a/2)\\
Y_2(s) &= \textbf{1}(|\cN(s)+\cN_2(s)-n/2^{k-1}|\ge a/2)\\
Y_3(s) &= \textbf{1}(|\cN(s)+\cN_3(s)-n/2^{k-1}|\ge a/2)\\
Y_4(s) &= \textbf{1}(|\cN_1(s)+\cN_2(s)+\cN_3(s)+\cN(s)-n/2^{k-2}|\ge a/2).
\end{align*}
When $\mu$ is the uniform measure $\pi$ we denote the expectation $
\bbE_\pi(Y_i(s))$ by $p_i$. Notice that $p_i$ does not depend on $s$ because of
reversibility w.r.t. $\pi$ and that $p_i \le
2 e^{-a^2/2n}$ because of \eqref{azuma1}. Let also 
\[
J_i=\int_I Y_{i}(s)ds, \quad i=1,\dots, 4,
\]
Using \eqref{LDP2} together with the fact that the spectral gap of the
chain $A(\cdot)$ is positive uniformly in $k,n$ (cf. Theorem \ref{thm:trel}
and \cites{Stong,peressly}), it follows that there exists $c>0$ such that, for $i=1,2,3$ and any $b_i\in (0,1)$,
$$\bbP_ \mu \left(|J_i-p_i t| \ge b_i t\right) \le \exp\left(-c{t
    b_i^2 }\right)+ d_{TV}(\mu_i,\pi_{k-1}),$$
where $t=t_2-t_1$.  
On the event that $J_{i} < (p_i+b_i)t$ for all $i=1,\ldots 4,$ the
fraction of times $s$ in $\D$ such that $Y_i(s)=0$ for all $i=1,\dots,4$  is at least $1-\sum_{i=1}^{4}(p_i+b_i).$
 For all such times  $s,$ a simple analysis shows that $|\cN(s)-
 \frac{n}{2^k}| \le a$. Therefore 
$$\int_\D \cN(s) \ge t(n/2^{k}-a)(1-\sum_{i=1}^{4}(p_i+b_i)).$$
 Hence by definition of $\cN$ (cf. \eqref{eq:N}), there exists $x\in
 [n]$ such that 
 \begin{gather*}
\int_\D\textbf{1}\Bigl(({\textbf M}_{i_1}(s;x),\dots,{\textbf M}_{i_k}(s;x))=(0,0,\ldots,0,1) \Bigr)ds \\
\ge
\frac{t}{n}(n/2^{k}-a)(1-\sum_{i=1}^{4}(p_i+b_i)).
 \end{gather*}
Choosing $a$ to be $n/(3.\, 2^{k})$
and  all the $b_i=\frac{1}{17}$, we see that
$\sum_{i=1}^{4}(p_i+b_i)\le 1/4$ and hence $x$ is good for $I$. 
\end{proof}

\section{Proof of Theorem \ref{thm:trel}}\label{proof345}
\begin{proof}For notational convenience and without loss of generality
  we consider only the full matrix walk (\ie
  $(i_1,\dots,i_k)=[n]$). Let us denote the spectral gaps of the MW
  $M(t)=({\textbf M}_1(t),\dots, \textbf{M}_n (t))$ and $\textbf{M}_n
  (t)$ (the last column which as mentioned before has the law of the
  East process with $p=1/2$) by $\lambda_1(n)$ and $\lambda_2(n)$
  respectively. Notice that $\l_2(n)$ is a non-increasing sequence
  (cf. \cite{CMRT}). That $\lambda_2(n)\ge \lambda_1(n)$ follows from the fact
  that $\textbf{M}_n (t)$ is a projection chain of $M(t).$ We now
  prove the other direction by combining the approach introduced in
  \cite{peressly} together with a crucial input from \cite{chleboun}.

Choose $i=n-1$ and $[t_1,t_2]=[0,t]$ in  \eqref{eq:linear} to write 
\[
{\textbf M}_n(t;\o)= {\textbf A}_0(t;\o) +\sum_{j=1}^\nu \a_j(\o) \xi_j(\o) {\textbf A}_j(t,\o)
 = {\textbf A}_0(t;\o) +\sum_{j=1}^\nu \xi_j(\o) {\textbf A}_j(t,\o),
\]
where for the last equality we use the fact that, as $M_{n,n}(t)=1$
deterministically, $\a_j=1\ \forall j\in [n]$.
Define the event $\mathcal C_n=\mathcal C_n(t)$ that the vectors
$\{{\textbf A}_j(t,\o)\}$ restricted to the first $n-1$ rows span the
vector space $\cX_{n-1}.$ Using Lemma \ref{span}, on the event
$\cC_n,$ we can couple $\textbf{M}_n (t)$ to a vector
distributed according the equilibrium measure independent of
$[\textbf{M}_1 (t),\textbf{M}_2 (t),\ldots, \textbf{M}_{n-1} (t)].$
Thus 
$$d_{TV}(M(t),\pi) \le d_{TV}(({\textbf M}_1(t),\dots,
\textbf{M}_{n-1} (t)),\pi)+\mathbb{P}(\cC_n^c),
$$
\ie 
$$d_{TV}(M^{(n)}(t),\pi)\le \sum_{i=1}^n\bbP(\cC^c_i).$$
The key new input is now the following result.
\begin{lemma}
\label{persi}
For each $n\ge 2$, $\lim_{t\to \infty}\frac 1t \log(\bbP(\cC_n^c))\ge -\l_2(n)$.  
\end{lemma}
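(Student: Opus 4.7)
The plan is to combine two ingredients: the coupling to equilibrium on the event $\cC_n$ afforded by Lemma \ref{span}, and the elementary lower bound on the total variation distance from equilibrium for a reversible chain in terms of its spectral gap.

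First, I would use the decomposition \eqref{eq:linear}, $\textbf{M}_n(t) = \textbf{A}_0 + \sum_{j=1}^\nu \xi_j \textbf{A}_j$, and observe two independence facts. The Bernoullis $\{\xi_j\}$ attached to the rings at row $n-1$ are independent of $\{\textbf{A}_j\}_{j\ge 0}$, since the latter is measurable with respect to the ring times at row $n-1$ together with the rings and Bernoullis at rows $[1,n-2]$. Moreover, these $\{\xi_j\}$ do not influence the evolution of the first $n-1$ columns, because a ring at row $n-1$ updates entry $(n-1,k)$ by $\xi\cdot M(n,k)$, which vanishes for every $k<n$. Consequently, on $\cC_n$ the span of $\{\textbf{A}_j\}_{j\ge 1}$ equals $\cX_{n-1}$, so $\sum_j \xi_j \textbf{A}_j$ is uniformly distributed on $\cX_{n-1}$ conditionally on everything else; hence $\textbf{M}_n(t)$ is conditionally uniform on $\cX_{n-1}\times\{1\}$ and independent of $(\textbf{M}_1(t),\dots,\textbf{M}_{n-1}(t))$. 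This yields the uniform-in-$M(0)$ coupling bound
\[
d_{TV}\bigl(\textbf{M}_n(t), \pi_{\rm col}\bigr) \le \bbP(\cC_n^c),
\]
where $\pi_{\rm col}$ denotes the uniform measure on $\cX_{n-1}\times\{1\}$.

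For the matching lower bound on the left-hand side, I would invoke standard reversible-chain theory. Since $\textbf{M}_n(\cdot)$ is the East process on $[n-1]$ at density $1/2$ (Remark \ref{rem:1}), which is reversible with spectral gap $\lambda_2(n)$, there exist an eigenfunction $f$ of its generator with eigenvalue $-\lambda_2(n)$ and an initial configuration $\eta_0 \in \cX_{n-1}\times\{1\}$ such that $f(\eta_0) \ne \pi_{\rm col}(f)$. Starting the MW from any $M(0)$ whose last column equals $\eta_0$, the identity $\bbE_{\eta_0}[f(\textbf{M}_n(t))] - \pi_{\rm col}(f) = (f(\eta_0)-\pi_{\rm col}(f))\,e^{-\lambda_2(n) t}$, together with the elementary estimate $|\bbE_{\eta_0}[f(\textbf{M}_n(t))] - \pi_{\rm col}(f)| \le 2\|f\|_\infty\, d_{TV}(\textbf{M}_n(t),\pi_{\rm col})$, gives
\[
d_{TV}\bigl(\textbf{M}_n(t), \pi_{\rm col}\bigr) \ge c(n)\, e^{-\lambda_2(n) t}, \qquad t\ge 0,
\]
for some constant $c(n)>0$. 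Combining with the first display then yields $\bbP(\cC_n^c) \ge c(n)\, e^{-\lambda_2(n) t}$, whence $\liminf_{t\to\infty}\tfrac{1}{t}\log\bbP(\cC_n^c) \ge -\lambda_2(n)$, as required.

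The hard part will be the first step, where one must verify in detail the independence structure of $\{\xi_j\}$ relative to both $\{\textbf{A}_j\}_{j\ge 0}$ and the first $n-1$ columns, so that the elementary $\bbF_2$-Fourier identity $\sum_j \xi_j \textbf{A}_j \sim \mathrm{Unif}(\mathrm{span}\{\textbf{A}_j\})$ applies conditionally on $\cC_n$. Given Lemma \ref{span} this verification should be routine, and the spectral argument in the second step is entirely standard.
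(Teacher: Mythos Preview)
Your argument is valid for the inequality as it is literally printed, and it is a clean way to see that $\bbP(\cC_n^c)$ cannot decay faster than $e^{-\lambda_2(n)t}$: the coupling bound $d_{TV}(\textbf{M}_n(t),\pi_{\rm col})\le \bbP(\cC_n^c)$ together with the standard spectral lower bound on total variation does exactly that. Unfortunately, the stated direction is a typo. In the surrounding proof of Theorem~\ref{thm:trel} one has $d_{TV}(M(t),\pi)\le \sum_i \bbP(\cC_i^c)$ and then concludes $\limsup_{t\to\infty}\tfrac{1}{t}\log d_{TV}(M(t),\pi)\le -\lambda_2(n)$; for that step one needs an \emph{upper} bound on $\bbP(\cC_i^c)$, i.e.\ $\limsup_{t\to\infty}\tfrac{1}{t}\log \bbP(\cC_n^c)\le -\lambda_2(n)$. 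Your inequality goes the other way and contributes nothing to the proof of $\lambda_1(n)\ge\lambda_2(n)$.

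The paper's own proof establishes the required $\le$ direction by a completely different mechanism. Using the adjoint-process description of Section~\ref{adpr}, the failure of $\cC_n$ is rewritten as the event that for some nonzero $\textbf{Z}^\top$ the (adjoint) East process started from $\textbf{Z}^\top$ never has a legal ring at the last site up to time $t$. A union bound over $\textbf{Z}^\top$ then gives $\bbP(\cC_n^c)\le 2^{2n}\,\bbP_\pi(\tau>t)$, and the input from \cite{chleboun} identifying the exponential decay rate of the persistence probability $\bbP_\pi(\tau>t)$ with $\lambda_2$ finishes the job. Your coupling inequality $d_{TV}\le \bbP(\cC_n^c)$ is one-sided and cannot be reversed, so your approach cannot reach the needed direction; the adjoint/persistence argument is essential here.
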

Assuming the lemma and recalling that $\l_2(i)\ge \l_2(n)$ for $i\le
n$, we get immediately that $\lim_{t\to \infty}\frac 1t
\log(d_{TV}(M(t),\pi))\le -\l_2(n)$. A standard functional analysis
argument now implies $\l_1(n)\ge \l_2(n)$. 
\end{proof}
\begin{proof}[Proof of Lemma \ref{persi}]
For the  stationary East process
$\sigma(t)=[\sigma_1(t),\sigma_2(t),\ldots \sigma_n(t)]$ on $[n],$ let
$\tau$ be the first time there is a legal ring at $n$, i.e. the clock
rings at $n$ and $\sigma_{n-1}(\tau)=1.$ 
In \cite{chleboun}*{Theorem A.1} it was proved that the
exponential decay rate of $\bbP_\pi(\t\ge t)$ is governed by the spectral gap
$\lambda_2$ :
\[
\lim_{t\to \infty} \frac{\log \bbP_\pi(\t\ge t)}{t}=-\lambda_2.
\]
Recall the notation from \eqref{eq:linear} that $0< \tau_1<\tau_2<\ldots<\tau_{\nu} <t$ are the ring times for the $n-1^{th}$ row. Thus in the ``adjoint'' randomness $\o^*,$ $$0< t-\tau_\nu<t-\tau_{\nu-1}<\ldots<t-\tau_{1} <t$$ are the ring times for the $n^{th}$ bit.
Now by the discussion of Section \ref{adpr},  choosing $I=[1,n-1]$ it
follows that the event $\cC_n$  occurs iff for any  row vector
${\textbf Z}^\top$ there exists $j\in[\nu]$ such that the (graphical
representation of) East process on $[2,\ldots, n]$, evolving with the adjoint
randomness $\o^*$ and initial condition $\textbf Z^\top$ (note that the first coordinate is frozen), at time
$t-\tau_j$ is equal to $1$ at the vertex $n-1$ for some $1\le j\le
\nu$, and hence  in the terminology of the East process, the ring at $t-\tau_j$ is legal for the $n^{th}$ bit. 
Thus putting the above together along with a shift of coordinate to transfer $[2,n]$ to $[n-1]$ we get 
\[
\bbP(\cC^c(n))\le \sum_{{\textbf Z}^\top\in \cX_{n}}\bbP_{{\textbf
  Z}^\top}(\tau >t)\le 2^{2n}\bbP_\pi(\t>t) 
\]
and the lemma follows.
\end{proof}

\section{Local mixing and the proof of Theorem \ref{finitecutoff}.}\label{local245}
In this section we first prove that certain patterns of bits as in Section
\ref{pattern} facilitate some form of local mixing of the
matrix walk (cf. Proposition  \ref{localmixing1} below). We then prove that
these patters are likely to occur (cf. Lemma \ref{good10}) and, as a consequence, we are able to establish a precise
recursion relation between the mixing times of the marginal of the MW on $k$
and $k-1$ columns (cf. Lemma \ref{mixtheorem}). Putting all together
we will finally deduce Theorem \ref{finitecutoff}.
\subsection{Local mixing}
\label{locmix}We first need some initial setup. Recall the
definition  given in Section \ref{sec:main results} of the space
$\cM_{[i_1,\dots,i_k]}$ as the set of the columns $1\le i_1<i_2<\ldots
<i_k\le n$ and let, for any $g: \cM_{[i_1,\ldots,i_k]} \mapsto [-1,1]$ and any subset
$\cI$ of
entries of the chosen columns,  $g_{\cI}$ be
the $\pi$-average of $g$ over the entries in $\cI$. 

Fix a burn-in time $t_1>0$ together with a positive time lag $\D=o(n)$
and let $t_2=t_1+\kappa\D$, where $\kappa$ is a large constant
(possibly depending on $k\,$\footnote{The constant $\kappa$  will be taken to be a large universal constant times
  $2^k$ for large $k$.}) to be fixed later on. Recall Definition
\ref{good} and let, for any $a\le n-2\D$, $\cB_a\equiv \cB_a(t_1,\D)$
be the event that there is a row $x\in [a+\D,a+2\D]$ which is good for
the time interval $[t_1,t_2]$. Finally let $\cI\equiv \cI_{i_k,a,\D}=\{i_k\}\times [1,a+\D]$.

In the above setting the local mixing result reads as follows:
\begin{proposition}\label{localmixing1} There exists a universal constant $c>0$ and a choice of $\kappa=\kappa(k)>0$ such that the following holds. For any $g: \cM_{[i_1,\ldots,i_k]} \to [-1,1]$ which does not depend on the entries $\{(i_k,j): 1\le j< a  \}$, any $t>t_2$  and any initial condition $M$ of the matrix walk:
 $$\left|\E \bigl[g(M(t))-g_{\cI}(M(t))\bigr]\right| \le 2[e^{-c\D}+\bbP(\cB_a^c)].$$
\end{proposition}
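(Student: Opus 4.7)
The plan is to use the linearity representation \eqref{eq:linear} to show that on $\cB_a$ together with a further event of probability at least $1-e^{-c\D}$, the $i_k$-th column of $M(t)$ restricted to rows $[a,a+\D]$ is exactly uniform conditional on all other entries of $M(t)$; this forces $\mathbb{E}[g(M(t))\mid \text{non-}\cI]=g_{\cI}(M(t))$ on that combined event, and the bound follows from the trivial off-event estimate $|g-g_{\cI}|\le 2$. Concretely, on $\cB_a$ I would fix in a measurable way a good row $x^*\in[a+\D,a+2\D]$ (choosing $x^*\ge a+\D+1$ when possible, which up to an absorbed failure event costs nothing extra) and apply \eqref{eq:linear} with $i=x^*-1$, $\cC=\{i_1,\ldots,i_k\}$ and the a priori pattern $(\textbf{M}_{i_1},\ldots,\textbf{M}_{i_k})(\cdot;x^*)=(0,\ldots,0,1)$ at row $x^*$. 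Denote by $\tau_1<\cdots<\tau_\nu$ the resulting rings at row $x^*-1$ in $[t_1,t_2]$ and by $\xi_1,\ldots,\xi_\nu$ their fresh Bernoulli$(1/2)$ bits. Goodness of $x^*$ and a Poisson--Chernoff estimate (in the spirit of Lemma \ref{LDPthm}) give $\nu\ge c\kappa\D/2^{k+1}$ off an event of probability $e^{-c\kappa\D/2^{k+1}}$. Crucially, since the $x^*$-entries of columns $i_1,\ldots,i_{k-1}$ vanish at each $\tau_j$, the update $(\bbI+\xi_j E^{x^*-1,x^*})$ is the identity on those columns, whence $\textbf{M}_{i_\ell}(t)=\textbf{M}_{i_\ell}(t;\hat\omega)$ for $\ell<k$ while $\textbf{M}_{i_k}(t)=\textbf{A}_0+\sum_{j=1}^\nu \xi_j\textbf{A}_j$ with $\textbf{A}_0,\textbf{A}_j$ measurable with respect to $\hat\omega$, $\vec\xi$ uniform on $\cX_\nu$ independent of $\hat\omega$, and each $\textbf{A}_j$ supported in $[1,x^*-1]$ with $\textbf{A}_j(x^*-1)=1$.

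The central technical step is the span claim: with $I=[a,x^*-1]$ and $b=x^*-1$, for $\kappa$ a sufficiently large universal constant times $2^k$,
\[
\bbP\bigl(\{\textbf{A}_j|_I\}_{j=1}^\nu\text{ do not span }\cX_{|I|}\mid\cB_a\bigr)\le e^{-c\D}.
\]
By the adjoint-process argument of Section \ref{adpr}, failure is equivalent to the existence of a non-zero row vector $\textbf{Z}^\top$ supported on $I$ with $\Phi^*_{I,(\tau_j,t]}(\omega,\textbf{Z}^\top)(b)=0$ for every $j\in[\nu]$, and via the identification of the adjoint dynamics on rows with the East process (Section \ref{formal12}) this is the event that the East process started from $\textbf{Z}$ with adjoint randomness $\omega^*$ has value $0$ at site $b$ at every backward time $\{t-\tau_j\}\subset[t-t_2,t-t_1]$. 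For a fixed $\textbf{Z}^\top$, Lemma \ref{concentration_front} guarantees that within time $O(\D)$ the front of the East process reaches $b$ outside a probability $e^{-c\D^{1/3}}$; thereafter the East value at $b$ equals $1$ with positive density in time, by ergodicity behind the front, so a Chernoff bound (Lemma \ref{LDPthm}) applied to this indicator together with the near-Poisson character of the sample times $\{t-\tau_j\}$ gives single-$\textbf{Z}$ failure probability $\le e^{-c'\nu}$. A union bound over the at most $2^{|I|}\le 2^{2\D}$ choices of $\textbf{Z}^\top$, absorbed by choosing $\kappa$ a large enough multiple of $2^k$ so that $c'\nu\ge 3\D\log 2$, yields the span claim.

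Once the span holds, a standard rank computation (the local analog of Lemma \ref{span} used in Section \ref{proof345}) shows that the distribution of $\textbf{M}_{i_k}(t)|_I$ is uniform on $\cX_{|I|}$ conditional on everything else in $M(t)$: writing the non-$\cI$ constraints on $\vec\xi$ as $B^{\text{out}}\vec\xi=c$, full row rank of $\{\textbf{A}_j|_I\}$ forces the image of $\ker(B^{\text{out}})$ under $B|_{[a,a+\D]}$ to be all of $\cX_{\D+1}$. Since $[a,a+\D]\subseteq I$ and $g$ does not see column $i_k$ at rows $<a$, this gives $\mathbb{E}[g(M(t))\mid \text{non-}\cI]=g_{\cI}(M(t))$ on the good span event, so $\mathbb{E}[g-g_{\cI}]$ vanishes there; off that event $|g-g_{\cI}|\le 2$, whence
\[
\bigl|\mathbb{E}[g-g_{\cI}]\bigr|\le 2\,\bbP\bigl(\cB_a^c\cup\{\text{span fails}\}\bigr)\le 2\bigl(\bbP(\cB_a^c)+e^{-c\D}\bigr).
\]
I expect the span claim to be the main obstacle: defeating a $2^{O(\D)}$ union bound over initial conditions $\textbf{Z}^\top$ is precisely what forces $\kappa$ to grow like $2^k$ (matching the $k$-dependence in Theorem \ref{finitecutoff}), and the estimate must genuinely exploit the sharp front concentration of Lemma \ref{concentration_front} rather than a diffusive/CLT bound.
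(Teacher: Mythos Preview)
Your overall architecture matches the paper's: select a good row, apply the linearity representation \eqref{eq:linear} there, establish a span claim for the vectors $\{\textbf{A}_j|_I\}$ via the adjoint/East-process duality of Section~\ref{adpr}, and conclude that on the span event the expectation of $g-g_{\cI}$ vanishes. Two minor differences: (i) the paper makes the choice of good row canonical by taking the \emph{largest} good index $i$ in $[a+\Delta,a+2\Delta]$, so that the disjoint events $\cG_i=\{i\text{ is the largest good index}\}$ are each $\cF_{[i,n]}([0,t_2])$-measurable and hence independent of the adjoint-process randomness on rows $\le i-2$ needed for the span estimate; your ``measurable selection of $x^*$'' must respect this, which an arbitrary rule need not. (ii) The paper avoids your nested conditioning on ``non-$\cI$ parts of $M(t)$'': it simply conditions on $\hat\cF_t$ (all randomness except the bits $\xi_1,\dots,\xi_\nu$), observes that on the span event both $\E[g\mid\hat\cF_t]$ and $\E[g_{\cI}\mid\hat\cF_t]$ equal $g_{\cI_i}$ with $\cI_i=\{i_k\}\times[1,i-1]\supseteq\cI$, and concludes.

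The genuine gap is in your span-claim estimate. You route the single-$\textbf{Z}^\top$ bound through Lemma~\ref{concentration_front}, arguing that the East-process front reaches $b$ within time $O(\Delta)$ except with probability $e^{-c\Delta^{1/3}}$, and only thereafter invoke a Chernoff bound for $e^{-c'\nu}$. But $e^{-c\Delta^{1/3}}$ is only stretched-exponential, so after the union bound over the $2^{|I|}\le 2^{2\Delta}$ choices of $\textbf{Z}^\top$ the front contribution $2^{2\Delta}e^{-c\Delta^{1/3}}$ diverges; your asserted per-$\textbf{Z}^\top$ bound $e^{-c'\nu}$ therefore cannot hold as written, and your closing remark that the proof ``must genuinely exploit the sharp front concentration of Lemma~\ref{concentration_front}'' is exactly backwards. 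The paper's proof of Proposition~\ref{localmixing1} does not use Lemma~\ref{concentration_front} at all. Instead (Lemma~\ref{conditional}) it applies Lemma~\ref{LDPthm} \emph{directly} to the East-process observable $f=\mathbf{1}(\text{site }i{-}1=1)-\tfrac12$ over the random but $\cF_{[i,n]}$-measurable time set $T_i$, simply paying the worst-initial-condition prefactor $1/\mu_{\min}=2^{O(\Delta)}$; since $|T_i|\ge\kappa\Delta/2^{k+1}$ on $\cG_i$ and the East spectral gap is uniformly positive, this yields a per-$\textbf{Z}^\top$ bound of order $2^{O(\Delta)}e^{-c\kappa\Delta/2^{k+1}}$, and the combined $2^{O(\Delta)}$ factors (prefactor plus union bound) are beaten by taking $\kappa$ a large universal constant times $2^k$. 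A final independent Poisson-thinning step converts ``positive-density subset of $T_i$ where $\cE_{i-1}(s;\textbf{Z}^\top)$ holds'' into ``some ring $\tau_j$ lands there''. Once you replace your front-concentration step by this direct application of Lemma~\ref{LDPthm}, the rest of your outline goes through.
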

The error term $\bbP(\cB_a^c)$ can be bounded from above using Lemma \ref{goodindex} provided that the burn-in time $t_1$ is large enough. Recall the definition
of $\tmix(n;
  [i_1,\dots,i_k],\eps)$ as the $\eps$-mixing time of the marginal matrix walk on the columns
  $i_1,i_2,\dots,i_k$ and let 
\[
\tmix(n,k,\eps)= \max_{i_1,\dots,i_k}\tmix(n;
  [i_1,\dots,i_k],\eps). 
\]
\begin{lemma}\label{good10}  There exists a universal constant $c$ such that, for any $\eps\in (0,1)$ and any $a\le n-2\D$, the following holds. If $t_1\ge\tmix(n,k-1,\eps)$ then
$$
\bbP(\cB_a^c)\le 4\e+e^{-c\D}.
$$
\end{lemma}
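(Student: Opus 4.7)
The plan is to invoke Lemma \ref{goodindex} restricted to the row interval $J := [a+\D,\, a+2\D]$, and then to control the four total variation error terms that arise using the mixing hypothesis $t_1 \ge \tmix(n,k-1,\eps)$.

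For the first step, I would repeat the proof of Lemma \ref{goodindex} verbatim with the counts $\cN(s), \cN_1(s), \cN_2(s), \cN_3(s)$ replaced by their restrictions to $x\in J$. The Azuma--Hoeffding input \eqref{azuma1} then uses $|J|=\D+1$ in place of $n$, so the analogous probabilities $p_i$ are bounded by $2\,e^{-c\D/4^{k}}$, which is at most $1/17$ once $\D$ is large enough in terms of $k$. The Chernoff-type bound of Lemma \ref{LDPthm} contributes the factor $\exp(-c(t_2-t_1))=\exp(-c\kappa\D)$, where the spectral gap is uniformly positive by Theorem \ref{thm:trel} and the earlier results of Stong and Peres--Sly. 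Choosing $\kappa=\kappa(k)$ sufficiently large absorbs the $k$-dependence of the rate into a universal constant, yielding the exponential error $e^{-c\D}$.

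For the second step, I would bound each of the four total variation distances arising in (the adapted) Lemma \ref{goodindex} by $\eps$. The marginals $\mu_1$, $\mu_2$ and $\mu_4$ of $M(t_1)$ are time-$t_1$ projections of the matrix walk onto $k-1$ (respectively $k-2$) of the chosen columns, and hence are $\eps$-close to their uniform equilibria by the hypothesis $t_1\ge\tmix(n,k-1,\eps)$. The subtle case is $\mu_3$, the marginal on $(\textbf{M}_{i_1},\ldots,\textbf{M}_{i_{k-2}},\textbf{M}_{i_{k-1}}+\textbf{M}_{i_k})$. Here I use the linearity of the matrix walk noted in Remark \ref{linear34}: because the graphical construction applies the \emph{same} Bernoulli variables $\{\xi_{x,k}\}$ to every column, the sum column $\textbf{M}_{i_{k-1}}+\textbf{M}_{i_k}$ evolves by the matrix-walk dynamics, and the joint process on these $k-1$ vectors is itself, after a linear change of basis, the marginal of a matrix walk on $k-1$ columns with indices $(i_1,\ldots,i_{k-2},i_k)$. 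Its time-$t_1$ law is therefore also $\eps$-close to the uniform equilibrium by the same mixing hypothesis.

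Summing the four $\eps$-contributions with the exponential error produces the claimed bound $\bbP(\cB_a^c)\le 4\eps+e^{-c\D}$. The main conceptual obstacle is the linearity-based identification of the sum process as a $(k-1)$-column matrix walk, which crucially uses the shared randomness across columns; the restriction of Lemma \ref{goodindex} to a row interval amounts to a routine bookkeeping adjustment of its original proof, and the control of the spectral-gap and Azuma constants is absorbed into the choice of $\kappa(k)$.
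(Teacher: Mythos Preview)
Your proposal is correct and follows the same route as the paper. The paper's own proof is extremely terse: it simply records the four total-variation inequalities and then invokes Lemma~\ref{goodindex} ``and the definition of $t_2$'' to conclude. You supply the two details the paper suppresses: (i) that Lemma~\ref{goodindex} must be rerun with the counts $\cN,\cN_1,\cN_2,\cN_3$ restricted to the row window $J=[a+\D,a+2\D]$ (otherwise one only produces a good row somewhere in $[n]$, which is the wrong inclusion for bounding $\bbP(\cB_a^c)$), and (ii) that the $\mu_3$ term is controlled via linearity, since $({\textbf M}_{i_1},\dots,{\textbf M}_{i_{k-2}},{\textbf M}_{i_{k-1}}+{\textbf M}_{i_k})$ is itself a matrix walk on $\cM_{[i_1,\dots,i_{k-2},i_k]}$ and hence is $\eps$-mixed at time $t_1$. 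One small correction: the exponential rate coming from Lemma~\ref{LDPthm} is already universal (it involves only $b_i=1/17$ and the uniform spectral gap), so choosing $\kappa$ large is not needed to absorb any $k$-dependence there; the $k$-dependence sits instead in the requirement that $\D$ be large enough to make the Azuma probabilities $p_i\le 2e^{-c\D/4^k}$ small, which is implicit in both proofs.
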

Before proving the lemma and the proposition we
prove Theorem \ref{finitecutoff}. 

\subsection{Proof of Theorem \ref{finitecutoff}} 
At the basis of the argument there is the following recursive relation between the mixing times of $k$ and $k-1$ columns.
\begin{lemma}\label{mixtheorem} There exists $c>0$ such that the
  following holds. Fix $\eps\in
  (0,1)$ and let $\eps'=\eps/(9\sqrt{n})$. Then, for any
  given $k\in \N$ and $n$ large enough, 
$$
\tmix(n,k,\eps) \leq \tmix(n,k-1,\eps')
+ c.2^k\, \sqrt{n}.
$$
\end{lemma}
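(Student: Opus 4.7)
The plan is to combine the burn-in time $\tmix(n,k-1,\eps')$ with an iterative application of Proposition \ref{localmixing1} to mix the remaining column $i_k$ in $O(2^k\sqrt{n})$ additional time. Set $t_1 := \tmix(n,k-1,\eps')$, $\Delta := C\sqrt{n}$ for a suitable universal constant $C$, $\kappa = \kappa(k) = O(2^k)$ the constant from Proposition \ref{localmixing1}, and $t_2 := t_1 + \kappa\Delta$. For any initial matrix $M$ and any $g:\cM_{[i_1,\ldots,i_k]}\to[-1,1]$, I will bound $|\bbE_M[g(M(t_2))]-\pi(g)|$ by $2\eps$, which gives $d_{TV}(M(t_2),\pi)\le\eps$ and hence the lemma.

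The iteration proceeds along the sequence $a_\ell := 1 + \ell(\Delta+1)$ for $\ell = 0,1,\ldots,L-1$, with $L=O(\sqrt{n})$ chosen so that $a_{L-1}+\Delta$ covers all the random rows of column $i_k$. Set $g_0 := g$ and inductively $g_{\ell+1} := (g_\ell)_{\cI_\ell}$, the $\pi$-average of $g_\ell$ over $\cI_\ell = \{i_k\}\times[1,a_\ell+\Delta]$. By construction $g_\ell$ does not depend on entries $(i_k,j)$ with $j<a_\ell$, so Proposition \ref{localmixing1} applies at each step and gives
\[
\bigl|\bbE_M[g_\ell(M(t_2)) - g_{\ell+1}(M(t_2))]\bigr| \le 2\bigl[e^{-c\Delta}+\bbP(\cB_{a_\ell}^c)\bigr].
\]
The hypothesis of Lemma \ref{good10} is precisely $t_1\ge\tmix(n,k-1,\eps')$, so $\bbP(\cB_{a_\ell}^c)\le 4\eps'+e^{-c\Delta}$. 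Telescoping over the $L$ steps, the cumulative error is $2L[2e^{-c\Delta}+4\eps']$; with $L=O(\sqrt{n})$, $\Delta=C\sqrt{n}$, and $\eps'=\eps/(9\sqrt{n})$, the dominant term $8L\eps'$ is bounded by $8\eps/9$ upon choosing $C$ large enough, while the $Le^{-c\Delta}$ piece is super-polynomially small. After the iteration, $g_L$ depends only on entries of columns $i_1,\ldots,i_{k-1}$, so by definition of $t_1$ and monotonicity of mixing along time, $|\bbE_M[g_L(M(t_2))]-\pi(g_L)|\le 2\eps'$, while $\pi(g_L)=\pi(g)$ trivially since averaging under $\pi$ preserves the $\pi$-average. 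Summing these estimates yields the required bound.

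The principal obstacle is the constraint $a\le n-2\Delta$ in Proposition \ref{localmixing1}, which obstructs the direct mixing of the last $\sim\Delta$ random entries of column $i_k$ (rows close to $i_k-1$) when $i_k=n$: the iteration above can only be pushed to $a_{L-1}=n-2\Delta$, leaving roughly $\Delta=O(\sqrt{n})$ rows uncovered. I plan to handle these tail rows by exploiting two facts: first, the diagonal row $i_k$ is trivially a good row (the pattern $(0,\ldots,0,1)$ holds there unconditionally), so $\cB_{a}$ holds automatically for $a$ in the range $[i_k-2\Delta,i_k-\Delta]$; second, near the diagonal the East-process dynamics is essentially unconstrained (the facilitator at row $i_k$ is always present), and so the last $O(\Delta)$ rows mix locally in time $O(\log n)$ via a short supplementary argument --- a refinement that can be absorbed into the $O(2^k\sqrt{n})$ time budget.
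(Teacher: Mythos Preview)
Your proposal is correct and follows essentially the same route as the paper: set $t_1=\tmix(n,k-1,\eps')$, $t_2=t_1+\kappa\Delta$ with $\Delta\asymp\sqrt n$, telescope $g$ by successively $\pi$-averaging over blocks of the $i_k$-th column, bound each increment via Proposition~\ref{localmixing1} together with Lemma~\ref{good10}, and control the final term (which depends only on columns $i_1,\dots,i_{k-1}$) by the definition of $t_1$. The paper uses $\Delta=\lfloor\sqrt n\rfloor$ and block endpoints $a=j\Delta$, $j=0,\dots,N-1$ with $N=n/\Delta$; your choice $a_\ell=1+\ell(\Delta+1)$ is an immaterial variant.

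The one point where you go beyond the paper is the boundary constraint $a\le n-2\Delta$ in Proposition~\ref{localmixing1}. You are right to notice it: the paper's last telescoping step formally has $a=(N-1)\Delta=n-\Delta$, which lies outside the stated range. Your first observation already resolves this completely: the diagonal row $i_k$ always exhibits the pattern $(0,\dots,0,1)$ (by upper-triangularity and the $1$'s on the diagonal), so it is automatically good for every time interval; hence for any $a$ with $i_k\in[a+\Delta,a+2\Delta]$ the event $\cB_a$ holds deterministically, and the proof of Proposition~\ref{localmixing1} (including Lemma~\ref{conditional}) goes through verbatim with $i=i_k$. This covers the last $\sim\Delta$ rows in a single additional step. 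Your second proposed fix --- invoking a separate ``$O(\log n)$ local mixing'' argument near the diagonal --- is therefore unnecessary, and as stated is too vague to stand on its own; you should drop it and simply use the first observation.

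One minor wording issue: you write ``upon choosing $C$ large enough'' to ensure $8L\eps'\le 8\eps/9$. In fact any $C\ge 1$ already gives $L\le\sqrt n$ and hence $8L\eps'\le 8\eps/9$; there is no need to inflate $C$, and doing so would needlessly enlarge the constant $c$ in the conclusion.
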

Assuming the lemma we proceed as follows. Fix $k,n$ together with
$\eps\in (0,1)$ and let $\eps'= 9^{-k}n^{-k/2}\eps$. 
Using Corollary \ref{onecutoff} we get
$$\tmix(n,1,\eps') \le n/v_*+ \sqrt{n}\log^2(n)$$
for all $n$ large enough depending on $\eps,k$.
Now applying Lemma \ref{mixtheorem} $k-1$ times starting from $k=1$ gives 
$$\tmix(n,k,\eps) \le n/v_* + \sqrt{n}\log^2(n) +ck2^{k} \sqrt{n}.$$ 
\qed

\begin{proof}[Proof of Lemma \ref{mixtheorem}]
Given $g:\cM_{[i_1,\dots,i_k]} \mapsto \bbR$ such that 
  $\|g\|_\infty\le 1$, let $g^{(j)}$ be defined
  recursively as follows: 
\begin{align*}
 g^{(0)}(\cdot)=g(\cdot),\qquad
  g^{(j)}(\cdot)=g_{\cI_{k,j}}^{(j-1)}(\cdot),\quad j\le N,
\end{align*}
where $N=n/\D$ and $\cI_{k,j}=\{i_k\}\times R_j$ with $\D=\lfloor
\sqrt n\rfloor$ and $R_j=((j-1)\D,j\D]$.
Since $g^{(j)}$  is the $\pi$-average of $g^{(j-1)}$
w.r.t. to the entries of the $i_k^{th}$-column in the interval $R_j$
we get in
particular that $g^{(j)}(\cdot)=g_{\{i_k\}\times [1,j\D]}(\cdot)$ and
$g^{(N)}$ is the $\pi$-average of $g$ over the entries on the $i_k^{th}$-column.

A simple telescopic sum together with the triangle inequality gives  
\begin{gather}\label{telescoping1}
|\bbE_{A}\left[g(M(t))-\pi(g)\right]|\nonumber \\
 \le  |\bbE_{M}\left[g^{N}(M(t))-\pi(g)\right]|+ \sum_{j=0}^{N-1} |\bbE_{M}\left[g^{(j)}(M(t))-g^{(j+1)}(M(t))\right]|.
\end{gather}
Notice that $g^{N}(\cdot)$ depends only on the entries
in the $i^{th}_1,\dots,i^{th}_{k-1}$-columns so that  
 \[
|\bbE_{M}\left[g^{N}(M(t))-\pi(g)\right]| \le
2d_{TV}((\textbf{M}_{i_1}(t),\dots,{\textbf{M}}_{i_{k-1}}(t)),\pi),
\]
where, with an abuse of notation,
$\pi$ denotes the uniform measure on the chosen $k$ columns of the initial matrix $M$.

Next we observe that for any $j \le N-1$  the function $g^{(j)}$
satisfies the hypothesis of Proposition \ref{localmixing1} if we
choose $a=j\D$.  
Thus, if $t=\tmix(n,\eps',k-1)+\kappa \D$ with $\kappa$ the
constant appearing in Proposition \ref{localmixing1}, we can appeal to
Proposition \ref{localmixing1} and Lemma \ref{good10} to get that
every term in the sum is bounded from above by  $8\e'+ 4 e^{-c\D}$ for some universal constant $c>0$.

In conclusion 
\begin{align*}
2d_{TV}((\textbf{M}_{i_1}(t),\dots,{\textbf{M}}_{i_{k}}(t)),\pi)&=\max_{\substack{g:\cM_{i_1,\dots,i_k} \mapsto \bbR \\
  \|g\|_\infty\le 1}}|\bbE_{M}\left[g(M(t))-\pi(g)\right]| \\& \le
 2d_{TV}((\textbf{M}_{i_1}(t),\dots,{\textbf{M}}_{i_{k-1}}(t)),\pi)+ N(8\e' +4e^{-c\D})\\
& \le 2\eps' + 8\sqrt{n}(\eps' +e^{-c\sqrt{n}})\le \eps,
\end{align*}
where the last two inequalities follow by the assumption that
$t>\tmix(n,\eps',k-1)$ and our choice of $\D$.
\end{proof}

\subsection{Local mixing: proofs}
\label{localmixingproofs}
Recall the notation defined at the beginning of the section.
\begin{proof}[Proof of Proposition \ref{localmixing1}] 
Fix $g: \cM_{[i_1,\ldots,i_k]} \to [-1,1]$ which does not depend on the entries $\{(i_k,j): 1\le j< a  \}$ and, for $i\in [a+\D,a+2\D]$, let $\cG_{i}$ be the event that $i$ is the
largest good index in $[a+\D,a+2\D]$ w.r.t. the time interval
$[t_1,t_2]$.

Observe that $\cG_i$ is measurable with respect to the $\s$-algebra $\cF_{[i,n]}([0,t_2])$ and hence
also w.r.t. $\cF_{[i,n]}([0,t])$ if $t\ge t_2$. 
Let 
\begin{equation}\label{ringtimes}
t_1< \tau_1 < \tau_2 < \ldots \tau_\nu< t_2 < \tau_{\nu+1}
\end{equation}
 be the set of  random times $t$ such that the clock of the
 $(i-1)^{th}$-row rings and the $i^{th}$-entries of the chosen columns are
 all zero except the last one (i.e. they form the pattern $[0,0,\dots,0,1]$). 

Let $\xi_j\equiv \xi_{i-1,\tau_j}$ be the Bernoulli variable associated
to $\tau_j$ and let $\hat \cF_t$ be the $\s$-algebra
generated by all the variables generating $\cF_{[2,n]}([0,t])$ except
the variables $\{\xi_j\}_{j=1}^\nu$. Recalling \eqref{eq:linear},
we have 
\begin{equation*}
  {\textbf M}_{i_k}(t;\o)= {\textbf A}_{0}(t;\o) +\sum_{j=1}^\nu \xi_j(\o) {\textbf A}_j(t,\o),
\end{equation*}
where ${\textbf A}_{j}(t;\o)=\Phi_{I,\D_j}(\o,{\textbf e}_i)$ with
$I=[1,i-1]$ and $\D_j=(\tau_j,t]$. Notice that the
 variables $\a_j$ appearing in \eqref{eq:linear} are all equal to one
 because of the definition of the times $\t_1,\dots,\t_\nu$. The
 fundamental property of this decomposition is that $\{({\textbf  A}_{0},{\textbf A}_{j})\}_{j=1}^\nu$ as well as all the other columns
 $({\textbf M}_{i_1},\dots, {\textbf M}_{i_{k-1}})$  
 are all  measurable with respect to $\hat \cF$. 

Let now $\cP$ be the subspace of $\cX_{i-a}$ spanned by the
restriction of the vectors $\{{\textbf A}_{j}(t;\o)\}_{j=1}^\nu$ to the
row interval $[a,a+1,\dots,i-1]$ and let $\cC_i=\cG_i\cap\{\o:\ \cP=\cX_{i-a}\}$. Clearly the events $\cC_i$ are disjoint. Let also $\cI_0=\{i_k\}\times [1,a+\D]$.
\begin{claim}
\begin{equation}\label{couple}
\left|\E\left[g(M(t))-g_{\cI_0}(M(t))\right]\right| \le 2\bigl[1-\sum_{i=a+\D}^{a+2\D}\bbP(\cC_i)\bigr].
\end{equation}
\end{claim}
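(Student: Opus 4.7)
The plan is to use the pairwise disjointness of $\{\cC_i\}_{i=a+\D}^{a+2\D}$ (stated just before the claim) to write
\[
\E\bigl[g(M(t)) - g_{\cI_0}(M(t))\bigr] = \sum_{i=a+\D}^{a+2\D}\E\bigl[(g-g_{\cI_0})\mathbf{1}_{\cC_i}\bigr] + \E\bigl[(g-g_{\cI_0})\mathbf{1}_{\cC^c}\bigr],
\]
with $\cC := \bigcup_i \cC_i$. Since $g,g_{\cI_0}\in [-1,1]$, the second term has absolute value at most $2\bbP(\cC^c) = 2\bigl[1 - \sum_i\bbP(\cC_i)\bigr]$, which is exactly the right-hand side of \eqref{couple}. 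So it suffices to prove $\E[(g-g_{\cI_0})\mathbf{1}_{\cC_i}] = 0$ for every $i$.

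Fixing such an $i$, I would condition on the $\sigma$-algebra $\hat\cF$ and verify three points. (i) $\cC_i$ is $\hat\cF$-measurable: the event $\cG_i$ is determined by the values of the chosen columns at rows $\ge i$ during $[t_1,t_2]$, which in the graphical construction depend only on rings at rows $\ge i$ and therefore not on the $\xi_j$'s at row $i-1$; the span condition $\cP = \cX_{i-a}$ depends only on the $\hat\cF$-measurable vectors $\{{\textbf A}_j\}$. (ii) The pattern $(0,\ldots,0,1)$ enforced at row $i$ at each $\tau_j$ means that the $\xi_j$-ring at row $i-1$ adds zero to columns ${\textbf M}_{i_1},\ldots,{\textbf M}_{i_{k-1}}$ and perturbs only the $(i-1,i_k)$ entry of $M(t)$; hence those $k-1$ columns are $\hat\cF$-measurable, and so are rows $\ge i$ of ${\textbf M}_{i_k}(t)$ since rings at row $i-1$ do not update any row $\ge i$. (iii) On $\cC_i$, the spanning property combined with Lemma \ref{span} (as used in the proof of Theorem \ref{thm:trel}) implies that, conditional on $\hat\cF$, the random vector $\sum_j\xi_j{\textbf A}_j|_{[a,i-1]}$ is uniformly distributed on $\cX_{i-a}$.

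Uniformity on $\cX_{i-a}$ is equivalent to the $(i-a)$ coordinates being i.i.d.\ Bernoulli$(1/2)$. Consequently, conditional on $\hat\cF$ and on the coordinates of the $i_k$-column at rows $[a+\D+1,i-1]$, the coordinates at rows $[a,a+\D]$ are uniformly distributed on $\cX_{\D+1}$. Since $g$ does not depend on the entries $\{i_k\}\times[1,a-1]$ and every other input to $g$ is either $\hat\cF$-measurable or a function of the rows $[a+\D+1,i-1]$ of the $i_k$-column, averaging $g$ over those $\D+1$ uniform coordinates produces exactly $g_{\cI_0}$. Therefore $\E[g\mid\hat\cF] = \E[g_{\cI_0}\mid\hat\cF]$ on $\cC_i$, so $\E[(g-g_{\cI_0})\mathbf{1}_{\cC_i}] = 0$ and the claim follows. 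The main obstacle is the bookkeeping in step (iii): one has to ensure the $\xi_j$-dependence of the coordinates at rows $[a+\D+1,i-1]$ does not secretly constrain the coordinates at rows $[a,a+\D]$. The span hypothesis $\cP = \cX_{i-a}$ stated over the full interval $[a,i-1]$ — rather than just over $[a,a+\D]$ — is precisely what yields the i.i.d.\ Bernoulli structure across the whole range $[a,i-1]$, decoupling the two sub-intervals and guaranteeing the conditional uniformity needed to identify $\E[g\mid\hat\cF]$ with $\E[g_{\cI_0}\mid\hat\cF]$.
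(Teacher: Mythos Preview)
Your proof is correct and follows essentially the same strategy as the paper's. The only difference is cosmetic: the paper introduces the intermediate function $g_{\cI_i}$ with $\cI_i=\{i_k\}\times[1,i-1]$ and shows in one stroke that both $\E[g(M(t))\mathbf{1}(\cC_i)\mid\hat\cF_t]$ and $\E[g_{\cI_0}(M(t))\mathbf{1}(\cC_i)\mid\hat\cF_t]$ equal $g_{\cI_i}(M(t))\mathbf{1}(\cC_i)$, whereas you reach the same cancellation by first conditioning on the rows $[a+\D+1,i-1]$ of the $i_k$-column and then averaging over $[a,a+\D]$; the two arguments are equivalent since uniformity on $\cX_{i-a}$ is the same as i.i.d.\ Bernoulli$(1/2)$ coordinates. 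One small omission: when you pass from ``$\sum_j\xi_j{\textbf A}_j|_{[a,i-1]}$ is uniform'' to ``the coordinates of the $i_k$-column at rows $[a,i-1]$ are uniform'' you should note that the $\hat\cF$-measurable shift ${\textbf A}_0$ does not affect conditional uniformity.
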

\begin{proof}[Proof of the Claim]
Observe that, conditionally on $\hat \cF_t$, on the event $\cC_i$ all the entries ${\textbf M}_{i_k}(t;x), \ x \in [a,i-1]\}$, are i.i.d Bernoulli$(1/2)$ (cf. Lemma \ref{span}).  Thus, using the fact that $g$ does not depend on the entries ${\textbf M}_{i_k}(x),\ x<a,$ and letting $\cI_i=\{i_k\}\times [1,i-1]$, we get 
\begin{align}\label{conditioning10}
\E[g(M(t))\textbf{1}(\cC_i)|\hat \cF_t]& =g_{\cI_i}(M(t))\textbf{1}(\cC_i),\\
\nonumber
\E[g_{\cI_0}(M(t))\textbf{1}(\cC_i)|\hat \cF_t]& =g_{\cI_i}(M(t))\textbf{1}(\cC_i).
\end{align} 
In conclusion
\begin{gather}
|\E[g(M(t))-g_{\cI_0}(M(t))]|\nonumber \\
 =|\E\Bigl[\bigl[g(M(t))-g_{\cI_0}(M(t))\bigr]\, \bigl[\sum_{i=a+\D}^{a+2\D}\textbf{1}(\cC_i) +(1-\sum_{i=a+\D}^{a+2\D}\textbf{1}(\cC_i))\bigr]\Bigr]|\nonumber \\
=|\,\E[g(M(t))-g_{\cI_0}(M(t))][1-\sum_{i=a+\D}^{a+2\D}\textbf{1}(\cC_i)]\,|\nonumber \\
 \le 2(1-\sum_{i=a+\D}^{a+2\D}\bbP(\cC_i)).
\label{uffa}
\end{gather}
\end{proof}
The last step in the proof of Proposition \ref{localmixing1} is an upper bound on the r.h.s. of \eqref{uffa}. 
\begin{lemma}\label{conditional} There exists $n_0=n_0(k)$ such that, for all $n\ge n_o$ and all $i \in [a+\D,a+2\D],$ 
\[
\bbP(\cC_i| \cG_i)\ge 1- e^{-c\D},
\]
where $c>0$ is a universal constant. In particular
\[
1-\sum_{i=a+\D}^{a+2\D}\bbP(\cC_i)\le 1-(1-e^{-c\D})(\sum_{i=a+\D}^{a+2\D}\bbP(\cG_i))\le e^{-c \D} +\bbP(\cB_a^c).
\]
\end{lemma}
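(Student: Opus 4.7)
The plan is to use the adjoint process argument of Section \ref{adpr} to recast $\cC_i\cap\cG_i$ as a reachability event for an adjoint East process, to combine the front concentration of Lemma \ref{concentration_front} with equilibration of the East chain, and then to take a union bound over the at most $2^{i-a}-1\le 2^{2\Delta}$ nonzero test vectors. The choice $\kappa=\kappa(k)=\Theta(2^k)$ in the definition of good rows (see the setup preceding Proposition \ref{localmixing1}) is precisely what makes the single-vector failure probability smaller than $2^{-2\Delta}$ by a universal factor, so that the union bound closes.

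By the adjoint process argument of Section \ref{adpr}, spanning of $\{\textbf{A}_j|_{[a,i-1]}\}_{j\in[\nu]}$ fails iff there exists a nonzero row vector $\textbf{Z}^\top$ supported on $[a,i-1]$ with $\Phi^*_{[1,i-1],(\tau_j,t]}(\omega,\textbf{Z}^\top)(i)=0$ for every $j\in[\nu]$. Writing $\eta(s):=\Phi^*_{[1,i-1],(t-s,t]}(\omega,\textbf{Z}^\top)$ and $T_j:=t-\tau_j$, this is the event that the adjoint East process started at $\textbf{Z}^\top$ satisfies $\eta_i(T_j)=0$ for every $j$; via the identification after \eqref{eastdual}, $\eta$ is an East process at rate $1/2$ with $1$'s facilitating moves to the right, whose initial front sits at $r:=\max\{x:\textbf{Z}^\top(x)=1\}\in[a,i-1]$. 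Separately, since row $x$ of any chosen column evolves only by rings at rows $\ge x$, the event $\cG_i$ is measurable with respect to rings at rows $\ge i$ and is thus independent of the Poisson process at row $i-1$. Conditional on $\cG_i$, the count $\nu$ therefore stochastically dominates a Poisson of mean $\kappa\Delta/2^{k+1}$, so that $\nu\ge\kappa\Delta/2^{k+2}$ outside an event of probability $\le e^{-c\kappa\Delta/2^k}$.

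For a fixed $\textbf{Z}^\top$, Lemma \ref{concentration_front} yields that the front of $\eta$ reaches position $i$ by some (random) adjoint time $\sigma_0=O(\Delta/v_*)$. After $\sigma_0$ the restriction of $\eta$ to positions $\le i$ is well-approximated by Blondel's invariant measure \cite{Blondel}, under which $\eta_i$ has marginal Bernoulli$(1/2)$, and the East chain on any fixed-size window has a uniform positive spectral gap \cites{AD02,BFMRT}. Applying Lemma \ref{LDPthm} to $f(\eta)=2\cdot\mathbf{1}(\eta_i=1)-1$ on the interval $[\sigma_0,t-t_1]$, whose length is $\ge \kappa\Delta/2$ once $\kappa\ge C'\cdot 2^k$, one obtains
\[
|\mathcal{T}|\ge \kappa\Delta/2^{k+3},\qquad \mathcal{T}:=\bigl\{s\in[\sigma_0,t-t_1]\,:\,\eta_i(s)=1\bigr\},
\]
outside a further event of probability $\le e^{-c\kappa\Delta/2^k}$.

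It remains to argue that some $T_j$ lies in $\mathcal{T}$. Here lies the main obstacle: $\mathcal{T}$ and the $T_j$'s are both functions of the row-$(i-1)$ randomness, so independence is not for free. The resolution, in the spirit of the decoupling used in \cite{peressly}, is to propagate the front from $r$ to $i-1$ using only rings in rows $[1,i-2]$, and to reveal the row-$(i-1)$ Bernoullis in two stages --- those outside a chosen short window and those inside --- so that, conditional on $\mathcal{T}$, the $T_j$'s that lie in $\mathcal{T}$ dominate a Poisson variable of mean $\gtrsim\kappa\Delta/2^{k+3}$. This yields $\{T_j\}\cap\mathcal{T}=\emptyset$ with probability $\le e^{-c\kappa\Delta/2^k}$. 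Combining the three error estimates, the single-vector failure probability is at most $e^{-c'\kappa\Delta/2^k}$; a union bound over the $\le 2^{2\Delta}$ nonzero $\textbf{Z}^\top$ together with the choice $\kappa\ge C_0\cdot 2^k$ (with $C_0$ large enough) gives a total failure of at most $e^{-c_0\Delta}$ with $c_0>0$ universal, i.e.\ $\bbP(\cC_i\mid\cG_i)\ge 1-e^{-c_0\Delta}$. The displayed consequence in the lemma is then immediate from disjointness of the $\cG_i$'s, the identity $\sum_i\bbP(\cG_i)=1-\bbP(\cB_a^c)$, and $\bbP(\cC_i)\ge (1-e^{-c_0\Delta})\bbP(\cG_i)$.
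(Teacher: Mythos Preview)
Your overall strategy --- adjoint process reduction, union bound over at most $2^{2\Delta}$ nonzero test vectors, a large-deviation estimate for the East chain, a Poisson hitting argument, and the choice $\kappa=\Theta(2^k)$ --- is exactly the paper's. The difference, and the gap, is in how you execute the large-deviation step.

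You apply Lemma \ref{LDPthm} on the full interval $[\sigma_0,t-t_1]$ to get $|\cT|\gtrsim \kappa\Delta$, but what is actually needed is a lower bound on $|\cT\cap T_i|$ (in adjoint-time notation), since the rings $\tau_j$ are precisely the row-$(i-1)$ Poisson points that fall in the good-pattern set $T_i$. Knowing separately that $|\cT|$ and $|T_i|$ are large does not by itself force their intersection to be large, and your ``two-stage revealing of the row-$(i-1)$ Bernoullis'' does not address this: the $\tau_j$'s are determined by the Poisson \emph{times} at row $i-1$ together with $T_i$, not by the coin flips $\xi_{i-1,\cdot}$.

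The paper closes this gap by applying Lemma \ref{LDPthm} with $A=T_i$ directly. The point is the clean three-way independence: $T_i$ is $\cF_{[i,n]}$-measurable, the adjoint East process at vertex $i-1$ depends only on $\cF_{[1,i-2]}$, and the Poisson clock $R_{i-1}$ at row $i-1$ is independent of both. Conditioning on $\cF_{[i,n]}$ freezes $T_i$, and then the Chernoff bound for the East chain over the (now deterministic) set $T_i$ gives $|\{s\in T_i:\eta_{i-1}(s)=1\}|\ge |T_i|/4$ with failure probability at most $\mu_{\min}^{-1}e^{-c|T_i|}\le 2^{O(\Delta)}e^{-c\kappa\Delta/2^{k+1}}$. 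This makes the front-concentration step (Lemma \ref{concentration_front}) and the appeal to Blondel's invariant measure unnecessary: Lemma \ref{LDPthm} already handles an arbitrary initial condition through the $\mu_{\min}^{-1}$ prefactor, which is $2^{O(\Delta)}$ because the relevant East chain lives on an interval of length at most $2\Delta$. Once $|\{s\in T_i:\eta_{i-1}(s)=1\}|$ is large and depends only on rows $\ne i-1$, the independent Poisson process $R_{i-1}$ hits it except with probability $e^{-\Omega(\kappa\Delta/2^k)}$, with no staged revealing required.
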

Clearly the lemma finishes the proof of Proposition \ref{localmixing1}.
\end{proof}
\begin{proof}[Proof of Lemma \ref{conditional}]
Fix $i \in [a+\D,a+2\D]$ together with $\o \in \cG_i$ and recall \eqref{ringtimes}. Using the \emph{adjoint process argument} described in Section
\ref{adpr}, $\o\in \cC_i$ if for any row vector $\mathbf
Z^\top\in \cX^\top_n$ such that $\mathbf Z^\top(x)=0$ for all $x\notin
[a+\D,i-1]$ there exist $j\in [\nu]$ such that the East process on
$[n]$, evolving with the
adjoint randomness $\o^*=\o^*_t$ (cf. the discussion after Remark
\ref{eastdual}) and starting from $\mathbf Z^\top$, at time $t-\t_j$ is
equal to one at the vertex $i-1$. It is
crucial for what follows that, for any given $s\in [t_1,t_2]$, the
event $\cE_{i-1}(s;\mathbf Z^\top)$ that the East
process using $\o^*$ and starting from $\mathbf Z^\top$, at time $t-s$ is
equal to one at the vertex $i-1$  involves only the
randomness $\xi_{x,k},t_{x,k}$ with $x\in [1,i-2]$ and $k$ such that
$t_{x,k}\in [s,t]$.
    
Let also $T_i(\o) \subset
[t_1,t_2]$ be the measurable subset of all times $s\in [t_1,t_2]$ such
that the restriction of the $i^{th}$-row of $M(s;\o)$ to the columns
$i_1,\dots,i_k$ form the pattern $\bigl(0,0,\ldots,0,1\bigr)$. Notice that $T_i(\cdot)$ is measurable
w.r.t. $\cF_{[i,n]}([0,t_2])$ and that $\t_j\in T_i\ \forall j\in
[\nu]$. Moreover $T_i$ is the union of intervals and, for $\o\in
\cG_i$,  its Lebesgue measure $|T_i(\o)|$ satisfies (cf. Definition \ref{good})
\[
|T_i(\o)|\ge (t_2-t_1)/2^{k+1}\ge \kappa \D/2^{k+1}.
\] 
Finally let $R_{i-1}(\o)=\{t_{i-1,k}\}_{k=1}^\infty$ be the rings of the Poisson clock of the $(i-1)^{th}$-row. 
A simple union bound over the at most $2^\D$ possible choices of the row vector
${\mathbf Z}^\top$ proves that the lemma follows if we can show that
\[
\max_{{\mathbf Z}^\top\text{ as above }}\bbP(\cE_{i-1}(s;{\mathbf Z}^\top) \text{ fails }\forall s\in T_i\cap R_{i-1} 
\tc
\cG_i)\le e^{-c'\D},
\]
where the constant $c'$ can be made as large as we want (in particular
larger than $\log 2$) by choosing the
constant $\kappa$ in $t_2=t_1+\kappa \D$ large enough.

Using the independence between the event $\cE_{i-1}(s;{\mathbf Z}^\top)$ and the
$\s$-algebra $\cF_{[i,n]}([t_1,t_2])$, the fact that $T_i$
is measurable w.r.t. $\cF_{[i,n]}([t_1,t_2])$ and \eqref{LDP1} of Theorem \ref{LDPthm}
applied to the East process, we obtain
\begin{gather*}
{\textbf 1}(\cG_i)\bbP\Bigl(|\{s\in [t_1,t_2]:\ \cE_{i-1}(s;{\mathbf Z}^\top) \text{ holds }\}\cap T_i|
\le |T_i|/4\tc
\cF_{[i,n]}([t_1,t_2])\Bigr)\\
\le {\textbf 1}(\cG_i)\bbP\Bigl(\Big|\int_{t_1}^{t_2}\ ds \bigl({\textbf
  1}(\cE_{i-1}(s;{\mathbf Z}^\top))-\frac 12\bigr) {\textbf 1}(s\in
T_i)\ \Big|\ge
|T_i|/2\ \big|\  \cF_{[i,n]}([t_1,t_2])\Bigr)
\\
\le 2^{\D}e^{-c (t_2-t_1)/2^{k+1}}=2^{\D}e^{-c \kappa\D/2^{k+1}},
 \end{gather*}
for some numerical constant $c>0$ related to the spectral gap of the
East process.

Finally, using again the independence of $R_i$ from $\cF_{[1,i-2]}([0,t])$ and $\cF_{[i,n]}([0,t])$ we get that 
\[
\bbP(\{s:\
E_{i-1}(s;{\mathbf Z}^\top)\text{ holds}\} \cap T_i\cap
R_i|=\emptyset\tc \cF_{[1,i-2]\cup[i,n]}([0,t]))\le 
e^{-|\{s:\
E_{i-1}(s;{\mathbf Z}^\top)\text{ holds}\} \cap T_i|}.
\]
In conclusion, for any ${\mathbf Z}^\top\text{ as above }$, 
\begin{gather*}
  \bbP(\cE_{i-1}(s;{\mathbf Z}^\top) \text{ fails }\forall s\in T_i\cap R_{i-1} 
\tc
\cG_i)\le 2^{\D}e^{-c \kappa\D/2^{k+1}}+ e^{- \kappa\D/2^{k+3}}
\end{gather*}
and the proof is finished by taking $\kappa$ to be a large enough constant times $2^k$.
\end{proof}

\begin{proof}[Proof of Lemma \ref{good10}]   
Using the definition of $t_1$ we have 
\begin{align*}
  d_{TV}\left(\left({\textbf M}_{i_1}(t_1),\dots, {\textbf
      M}_{i_{k-1}}(t_1)\right),\pi\right)&\le \e\\
d_{TV}\left(\left({\textbf M}_{i_1}(t_1),\dots, {\textbf
      M}_{i_{k-2}}(t_1)\right),\pi\right)&\le \e\\
d_{TV}\left(\left({\textbf M}_{i_1}(t_1),\dots, {\textbf
      M}_{i_{k-2}}(t_1),{\textbf M}_{i_k}(t_1)\right),\pi\right)&\le
                                                                  \e\\
d_{TV}\left(\left({\textbf M}_{i_1}(t_1),\dots, {\textbf
      M}_{i_{k-1}}(t_1)+{\textbf M}_{i_k}(t_1)\right),\pi\right)&\le \e.
\end{align*}
Lemma \ref{goodindex} and the definition of $t_2$ now imply that 
\[
\bbP(\cB^c) \le e^{-c(t_2-t_1)}+4\e\le e^{-c\D}+4\e.
\]
\end{proof}

\appendix
\section{}
We collect here some  result and proofs that were omitted from the main text.  We begin with a simple lemma on the distribution of random linear combinations in $\cX_n$
that we state here without proof.
\begin{lemma}\label{span} For any $n,k \in \N,$ given a set of  vectors ${\textbf V}^{(1)},\ldots,{\textbf V}^{(k)} \in \cX_n,$ and $k$ independent Bernoulli$(1/2)$ variables $\{\xi_1,\dots,\xi_k\}$, the random vector $\sum_{i=1}^{k}a_iv_i$ is distributed uniformly in $\cX_n$ iff ${\textbf V}^{(1)},\ldots,{\textbf V}^{(k)}$ span $\cX_n$. 
\end{lemma}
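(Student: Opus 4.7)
The plan is to recognize that the statement is a standard fact about linear maps between finite-dimensional vector spaces over $\bbF_2$, and to prove both directions by analyzing the image and fibers of a natural linear map. Define $T:\bbF_2^k \to \cX_n$ by $T(\xi_1,\ldots,\xi_k)=\sum_{i=1}^k \xi_i {\textbf V}^{(i)}$. The key observation is that $T$ is $\bbF_2$-linear, and its image is exactly the span $\cS={\rm span}({\textbf V}^{(1)},\ldots,{\textbf V}^{(k)})$. Since $(\xi_1,\ldots,\xi_k)$ is uniform on $\bbF_2^k$, the random vector $T(\xi)$ is the pushforward under $T$ of the uniform measure.

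For the forward direction, suppose $\cS=\cX_n$, i.e. $T$ is surjective. Then for every $y\in \cX_n$, the fiber $T^{-1}(y)$ is a coset of $\ker T$ and hence has cardinality $|\ker T|=2^k/|\cS|=2^{k-n}$. Thus
\[
\bbP\Bigl(\sum_{i=1}^k \xi_i {\textbf V}^{(i)}=y\Bigr)=\frac{|T^{-1}(y)|}{2^k}=\frac{1}{2^n},
\]
which is the uniform distribution on $\cX_n$.

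For the converse, suppose the vectors do not span $\cX_n$, so $\cS$ is a proper subspace of $\cX_n$ and $|\cS|\le 2^{n-1}$. Then $T(\xi)\in \cS$ almost surely, so the distribution of $T(\xi)$ puts mass zero on the nonempty set $\cX_n\setminus \cS$; in particular it cannot be the uniform measure on $\cX_n$. Combining the two directions yields the equivalence claimed in the lemma.

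The argument is entirely routine linear algebra over $\bbF_2$; there is no real obstacle. The only mild subtlety is keeping straight that the coset/fiber counting argument works because $|\ker T|\cdot |{\rm Im}\, T|=2^k$ (the first isomorphism theorem for $\bbF_2$-vector spaces), which is what makes the uniform distribution on $\bbF_2^k$ push forward to the uniform distribution on $\cS$.
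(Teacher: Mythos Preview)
Your proof is correct and complete; the paper actually states this lemma without proof, so there is nothing to compare against. Your linear-algebra argument via the map $T:\bbF_2^k\to\cX_n$ and fiber counting is the standard way to fill in this routine fact.
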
 

\subsection{Proof of Lemma \ref{concentration_front}}\label{concentration}
We first recall one of the main technical results from \cite{GLM}
saying that, for
any $\h\in \G_\Front$, the increments in the position of the front (\eqref{front} )
(the variables $X$ below) behave asymptotically as a
stationary sequence of weakly dependent random variables with
exponential moments. In the sequel $\nu$ denotes the invariant measure of
the process as seen from the front (cf. Section \ref{sec:East}).

Define $\xi_n:= X(\h(n))-X(\h(n-1)),\ n\ge 1,$
so that
\begin{equation}
  \label{eq:19}
X(\h(t))=\sum_{n=1}^{N_t} \xi_n + \left[X(\h(t))-X(\h(N_t))\right], \quad N_t=
\lfloor t\rfloor.
\end{equation}


\begin{lemma}[\cite{GLM}*{Corollary 3.2}] 
\label{cor:wf} There exists $\alpha \in (0,1)$ and $\g>0$ such that
the following holds. Let $f:\bbR\mapsto [0,\infty)$ be such that $e^{-|x|}f^2(x)\in L^1(\bbR)$. Then
  \begin{equation}
    \label{eq:20}
C_f\equiv \sup_{\h\in
  \G_\Front}\bbE_\h\left[f(\xi_1)^2\right]<\infty,
  \end{equation}
and 
\begin{align}
\label{eq:20tris}
\sup_{\h\in \G_\Front}|\bbE_\h\left[f(\xi_n)\right]- \bbE_\nu\left[f(\xi_1)\right]|=
O(e^{-\g n^\a}) \quad \forall n\ge 1,
 \end{align}
where the constant in the r.h.s.\ of \eqref{eq:20tris} depend on $f$ only through the constant $C_f$.
%
\end{lemma}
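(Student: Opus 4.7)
The plan is to establish the two conclusions separately, since they require rather different arguments. The uniform moment bound \eqref{eq:20} reduces to a one-step tail estimate for the front increment $\xi_1$ under any initial condition, while the weak-dependence bound \eqref{eq:20tris} requires a stretched-exponential coupling between $\eta^\Front$ started from an arbitrary $\eta$ and the same process started from the stationary measure $\nu$.

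For \eqref{eq:20}, the crucial structural observation is that every event which displaces the front changes its position by \emph{exactly} $\pm 1$. A rightward displacement is produced by a ring at site $X+1$ (legal because $\eta_X=1$) with Bernoulli variable equal to $1$, placing a new $1$ at $X+1$; a leftward displacement comes from a ring at the current front $X$ with Bernoulli variable equal to $0$, which under the East constraint requires $\eta_{X-1}=1$ and hence leaves the new front at $X-1$. Consequently the instantaneous rate of front movement is at most $1/2$ in each direction, so the total number of jumps of the front in the unit time interval is stochastically dominated by a rate-one Poisson variable, yielding $\bbP_\eta(|\xi_1|\ge k)\le e^{-ck}$ uniformly in $\eta\in\G_\Front$. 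Combined with the hypothesis $e^{-|x|}f^2(x)\in L^1(\bbR)$, this bound gives $\bbE_\eta[f(\xi_1)^2]\le C_f<\infty$ with $C_f$ independent of $\eta$.

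For \eqref{eq:20tris} I would run a coupling. The variable $\xi_n$ is a functional of $\eta^\Front$ on the time window $[n-1,n]$ alone, and by the tail estimate from Step~1 combined with a finite-speed-of-propagation argument behind the front, it depends, up to an event of probability $e^{-cn^\alpha}$, only on the restriction of $\eta^\Front(n-1)$ to a window $[-L,0]$ with $L=n^\alpha$. Running two copies of $\eta^\Front$ on the same graphical construction, one started from $\eta$ and one from $\nu$, one can control the probability that the copies disagree on $[-L,0]$ at time $n-1$ by combining (i) Blondel's result that the marginal of $\nu$ on $(-\infty,-L]$ is exponentially close to Bernoulli$(1/2)$, so the two copies can be brought to coincide far behind the front at time $0$ with exponential success probability; (ii) the uniform-in-volume spectral gap of the East process, which drives mixing inside a window of size $L$ in time proportional to $L$; and (iii) a finite-speed-of-propagation bound preventing any remaining discrepancies from reaching the origin within time $n-1$. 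Tuning $\alpha\in(0,1)$ so that the window size balances the mixing time against the available time budget yields a probability of disagreement at most $e^{-\gamma n^\alpha}$. When the coupling succeeds the two $\xi_n$'s coincide; otherwise a Cauchy--Schwarz step using \eqref{eq:20} produces an error of order $2\sqrt{C_f}\,e^{-\gamma n^\alpha/2}$, giving the claim with implicit constant depending on $f$ only through $C_f$.

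The main obstacle is the coupling analysis in Step~2, specifically extracting a \emph{stretched-exponential} rate uniformly in the initial condition. Because the relaxation time of the East process on a window of size $L$ grows linearly in $L$, a genuine exponential rate in $n$ is out of reach; the exponent $\alpha<1$ arises precisely from the trade-off between window size and time budget. A further subtlety is that an arbitrary $\eta\in\G_\Front$ may exhibit atypical gap structure near the front, so a preliminary burn-in step must first bring the local configuration into a $\nu$-typical state before the coupling proper can equalize the two copies on $[-L,0]$.
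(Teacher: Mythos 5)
You are attempting to prove a statement that the paper itself does not prove: Lemma \ref{cor:wf} is imported verbatim from \cite{GLM}*{Corollary 3.2}, so there is no in-paper argument to compare against and your proposal must stand on its own. Its first half does: front jumps are exactly $\pm1$ (a ring at $X+1$ with bit $1$, or a ring at $X$ with bit $0$, which is legal only when $\eta_{X-1}=1$ and thus lands the front at $X-1$), each direction occurring at rate at most $1/2$, so $|\xi_1|$ is stochastically dominated by a Poisson$(1)$ variable uniformly in $\eta\in\G_\Front$; this gives \eqref{eq:20} (up to the harmless point that the $L^1$ hypothesis on $f$ should be read, as it is used in the Appendix with truncated $f$'s, as pointwise exponential-growth control rather than mere integrability).

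The gap is in Step 2, and it is the heart of the lemma. Your coupling scheme is internally inconsistent and circular. Item (i) cannot be carried out: the $\eta$-started copy has an \emph{arbitrary deterministic} configuration behind the front at time $0$ (for instance identically $0$, in which case no site behind the initial front ever flips and only the wake left by the advancing front equilibrates), so the two copies cannot be "brought to coincide far behind the front at time 0" with any probability, let alone exponentially good. Item (iii) is false as stated: discrepancies in the East process propagate at speed of order one, so a mismatch at distance $L=n^{\alpha}$ behind the front at time $0$ can reach the front region within time $O(n^{\alpha})\ll n-1$; nothing in your construction blocks this. The correct mechanism is that the front advances linearly into fresh territory and the law of the configuration in a growing window just behind it converges to $\nu$ at a stretched-exponential rate \emph{uniformly in the initial condition} -- the quantitative form of Blondel's local relaxation behind the front proved in \cite{GLM}, of which Corollary 3.2 there is a direct consequence. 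This is precisely the nontrivial input that your "preliminary burn-in" presupposes, so invoking it would be assuming what is to be proved, while the uniform spectral gap cannot substitute for it: the window of size $L$ sits against a moving boundary (the front) and its left boundary condition may be frozen at all zeros, so the claim that "the gap mixes a window of size $L$ in time proportional to $L$" is not available in this geometry. Granting that input, the one-unit-time localization of $\xi_n$ and the Cauchy--Schwarz step do yield \eqref{eq:20tris} with dependence on $f$ only through $C_f$, but as written your items (i)--(iii) do not produce the uniform bound $O(e^{-\g n^{\a}})$.
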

The proof of Lemma \ref{concentration_front} will follow by constructing the appropriate Doob's martingale and using martingale concentration by bounding the martingale difference. 
For any probably space $(\O,\cF,\bbP)$ let $\{X_1,X_2,\ldots,  X_n\},$ be random variables taking values in a set $\cS.$ Consider any function $\phi: \cS^n\to \bbR$ and a filtration $$\{\O,\emptyset\}=\cF_0 \subset \cF_1 \subset \ldots \cF_k=\cF$$ 
The Doob's martingale is given by 
$M_{i}=\E(\phi(X)|\cF_i),$ where $X=(X_1,\ldots, X_n).$
We define  the martingale difference $$V_{i}(\phi)=M_i-M_{i-1},$$ and let  $D_i(\phi)=||V_{i}(\phi)||_\infty.$
Then by the  classic Azuma-Hoeffding martingale concentration 
$$\bbP(|\phi(X)-\E(\phi(X)|>r) \le 2e^{-\frac{r^2}{2\sum_{i=1}^k D^2_i}}.$$
To use the above, in our setting  let $k=n,\ \phi(X)=\sum_{i=1}^n X_i$ and let $\cF_{i}=\cF_{\Z}([0, i])$ (cf. \eqref{variables}) and $k=n$.
Ideally we would want to take $X_i=\xi_i.$ However these variables are
not quite bounded but, using \eqref{eq:20}, they have exponential tails. Thus we choose $$X_i = \min (\max(\xi_i, K),-K),$$ (the value of $K$ would be specified later). 
We now bound the martingale differences.
Note that by \eqref{eq:20tris} for any $j\ge i+1$ choosing $f(x)=\min (\max(x, K),-K),$
\begin{align*}
\E(X_j|\cF_{i})&=O(K\,e^{-(j-i)^{\alpha}})+\E_{\nu}(f(\xi_1)),\\
\E(X_j|\cF_{i-1})&=O(K\,e^{-(j-i+1)^{\alpha}}) +\E_{\nu}(f(\xi_1)).
\end{align*}
Thus 
\begin{align*}
|M_{i}-M_{i-1}| &\le |X_{i}|+C\,K\sum_{i=1}^{n}e^{-\g i^{\alpha}}\\
&\le CK.
\end{align*}
Thus putting everything together we get, 
$$\bbP(|\phi(X)-\E(\phi(X))|>r) \le 2e^{-\frac{r^2}{(CK)^2}}.$$
All that is left is to take care of the truncation. 
Let $\xi=(\xi_1,\ldots, \xi_n).$
Using \eqref{eq:20} 
$$|\E(\phi(X))-\E(\phi(\xi))|=O(ne^{-cK}).$$
Thus we get 
$$\bbP(|\phi(\xi)-\E(\phi(\xi))|>r) \le ne^{-K}+2e^{-\frac{r'^2}{2K^2n}}$$
where $r'=r-O(nKe^{-cK}).$ When $r>C\sqrt{n}\log^{3/2}(n)$ for some large constant $C$ we choose $K^{3}=r^2/n$ and see that this choice yields for some constant $c>0,$
$$\bbP(|\phi(\xi)-\E(\phi(\xi))|>r) \le e^{-c(\frac{r^2}{n})^{1/3}}.$$
Thus we see for any $n,$
$$\bbP(|X(w(t_n))-vt_n|>r) \le e^{-c(\frac{r^2}{n})^{1/3}}.$$
For a general and large enough, $t_n\le t<t_{n+1}$ and $r>C\sqrt{n}\log^{3/2}(n),$
\begin{align*}
\bbP(|X(w(t))-vt|>r) & \le \bbP(|X(w(t_n))-vt_n|>r/2) +  \bbP(|X(w(t))-X(w(t_n))|\ge r/4), \\ 
& \le  e^{-c(\frac{r^2}{n})^{1/3}} +e^{-cr},\\
& \le  e^{-c(\frac{r^2}{n})^{1/3}}. 
\end{align*}
\qed

\section*{Acknowledgements}
A part of this work was completed when both the authors were visiting the Simons Institute for the Theory of Computing during the semester program on Counting Complexity and Phase Transitions.
We thank the Institute for its wonderful research environment. SG's research is supported by a Miller Research Fellowship.


\bibliography{Upper_triangular_cutoff}
\bibliographystyle{plain} 

%
%
%
%
%
\end{document}